\newcommand{\bmat}[1]{\begingroup%
	\begin{bmatrix} #1 \end{bmatrix}
	\endgroup}
\newcommand{\one}{\scalebox{1.1}{$\mathbf{1}$}}
\newcommand{\reals}{\ensuremath{\mathbb{R}}}
\newcommand{\integers}{\ensuremath{\mathbb{Z}}}
\newcommand{\coloneq}{\mathrel{\mathop:}=}
\newcommand{\eqcolon}{=\mathrel{\mathop:}}
\DeclareMathOperator{\tr}{tr}
\DeclareMathOperator{\card}{card}
\DeclareMathOperator{\supp}{supp}
\DeclareMathOperator{\trace}{tr}
\newcommand{\tld}[1]{\widetilde{#1}}
\newcommand{\skipthis}[1]{{}}
\newcommand{\Cadj}{C^{\textrm{a}}}
\newcommand{\Ccentr}{C^{\textrm{c}}}
\newcommand{\Cclosest}{C^{\textrm{d}}}
\newtheorem{Thm}{Theorem}
\newtheorem*{Thm*}{Theorem}
\newtheorem{Lem}[Thm]{Lemma}
\newtheorem{Def}[Thm]{Definition}
\newtheorem{problem}[Thm]{Problem}
\begin{document}
\title{The Use of Presence Data in Modelling Demand for Transportation}
 
\author{Jonathan Epperlein$^1$,
Jaroslaw Legierski$^{2,3}$,
Marcin Luckner$^3$, \\
Jakub Mare{\v c}ek$^1$\thanks{
Jakub can be reached at jakub.marecek@ie.ibm.com.
1: IBM Research -- Ireland, B3 IBM Campus Damastown, Dublin 15, Ireland.
2: Orange Labs Poland, 02-691 Warsaw, ul. Obrzezna 7, Poland.
3: Warsaw University of Technology, Faculty of Mathematics and Information Science, 
00-662 Warsaw, ul. Koszykowa 75, Poland.
}\; and
Rahul Nair$^1$}

\maketitle

\begin{abstract}
We consider the applicability of the data from operators of cellular systems to modelling demand for transportation. While individual-level data may contain precise paths of movement, stringent privacy rules prohibit their use without consent. Presence data aggregate the individual-level data to information on the numbers of transactions at each base transceiver station (BTS) per each time period. Our work is aimed at demonstrating value of such aggregate data for mobility management while maintaining privacy of users. In particular, given mobile subscriber activity aggregated to short time intervals for a zone, a convex optimisation problem estimates most likely transitions between zones. We demonstrate the method on presence data from Warsaw, Poland, and compare with official demand estimates obtained with classical econometric methods.

\end{abstract}

\section{Introduction}
\label{sec:intro}

Estimating travel demand is a key step of the transportation planning process. Cities typically build a modeling framework 
and use surveys along with observations to characterize how, where and by what means citizens move. These estimates are then 
used for management of existing infrastructure or for design of new services. Such an approach is resource-intensive, time 
consuming, and therefore can only be performed infrequently providing estimates that may be out-of-date. 

Alternatively, current movement patterns can be estimated based on data from mobile devices.
Several works have studied digital trajectories, or ``breadcrumbs'', which provide a very rich demand profile. 
Nevertheless, legislation and privacy regulation often restrict the use of such methods.
In Europe, in particular, requirements on processing telecommunications transmission data are set by European directives:
\begin{itemize}
\item Personal data should be protected, as per directive on e-Privacy(2002/58/EC) and directive on personal data protection (95/46/EC).
\item The consent of the data subject is required for processing any subscriber personal data.
\item The anonymized data  can be processed without consent only if they are aggregated  in the first step of processing and de-anonymization 
of the data is not possible. The anonymized record cannot not be connected with any subscriber in any case.
\end{itemize}
The use and processing of CDR records without the permission of the subscriber is hence not possible in the Europe,
and one may need to rely on aggregate data.

In this paper, we present methods to estimate trip generation and trip distribution rates from aggregate presence data. 
In particular, we consider mobile subscriber's location statistics aggregated in accordance with the above-mentioned regulations. 
We the present a linear program to be solved for each period of the discretisation of time to obtain the most likely estimate of movements
since the previous period.
We also present algorithms for obtaining the estimates of movements of likely movements over a number of periods,
or consider a distribution of trip durations.
Crucially, we show that the linear program can be solved in linear time, 
which makes the approach applicable in practice.

% JAKUB - could you perhaps provide a gentle introduction to the method here

\section{The Problem}

\subsection{Data}

The data comes in the form of time-stamped averages of event counts for each of a number 
of geographical zones: \textit{(Zone, Timestamp, Event Count)}, where the zones are
 defined by the local public transport operator.

More specifically: Say there are $N_Z$ zones $Z_1, \dotsc, Z_{N_Z}$, and we have a total of
 $N_t$ time intervals $[t_0, t_1],[t_1 t_2],\dotsc,[t_{N_t-1},t_{N_t}]$ for which the event 
 counts are reported; here, the intervals are all of equal length $T$, i.e.\ $t_\ell-t_{\ell-1}=T$, 
 but this is not necessary for the applicability of what follows. %  (make sure that is true once we're done)
Thus, the data consists of tuples of the form 
\[
	(Z_k, t_\ell, E_{k,\ell}) \qquad \text{ for } k=1,\dotsc,N_Z, \; \ell=1,\dotsc, N_t,
\]
which encode that ``In zone $Z_k$ during the time interval $[t_{\ell-1}, t_\ell]$, $E_{k,\ell}$ events were observed.''
 Note that the timestamp of the \textit{end} of the interval is reported in the dataset, and that there are $N_Z\cdot N_t$ such tuples.

For the purposes of this paper, we consider the number of events as a measure of how many unique terminals (phones, pagers, etc.) 
were present in the give zone over the given time interval. 

\subsection{Modelling user movement}

The given data suggests \textit{how many} terminals were present at any zone, but, due to privacy restrictions, not \textit{which}
 terminals were present where at what time. 
Hence, the information as to how users move from one zone to another is lost, and the problem we address in this paper is how to estimate 
this information from the aggregate data that we are given.
The problems treated are as follows:\\

\begin{problem}[One-Step Transitions]
    \label{prob1}
Given aggregate presence data for disjoint geographical zones at two points in time, 
and a cost function mapping each pair of geographical zones to a real number, 
estimate for each zone the proportion of users travelling  to each other zone,
within the time interval between the two points in time. 
\end{problem}

\begin{problem}[$k$-Step Transitions]
    \label{probK}
Given aggregate presence data for disjoint geographical zones at two points $t_1, t_2$ in time 
%\comment{This problem is about extrapolation, if you have one time-step, what do you estimate for $k$ timesteps, right? So I don't see what you need more than two timesteps for}, 
and a cost function mapping each pair of geographical zones to a real number, 
estimate for each zone the proportion of users  travelling to each other zone,
within time $k (t_2 - t_1)$. 
\end{problem}

\begin{problem}[Duration-$L$ Transitions]
    \label{prob2}
Given aggregate presence data for disjoint geographical zones, for two or more points $t_1, t_2, \ldots$ in time, 
and a cost function mapping each pair of geographical zones to a real number, 
and a given trip duration $L$, 
estimate for each zone the proportion of users travelling to each other zone,
with trip duration being $L$. 
\end{problem}

\begin{problem}[Realistic Transitions]
    \label{prob3}
Given aggregate presence data for disjoint geographical zones, for two or more points $t_1, t_2, \ldots$ in time, 
and a cost function mapping each pair of geographical zones to a real number, 
and a distribution of trip durations, as a histogram approximation of a probability density function $f_L$, 
estimate for each zone the proportion of users travelling to each other zone.
\end{problem}

Absent further information, we have to make assumptions about user behaviour, and the main assumption is that the users 
redistribute themselves in an \textit{optimal} way 
-- optimal with respect to a cost function, the design of which requires insights into the underlying transportation structure.
This is best explained via a trivial-seeming example:\\
Assume first, that there are only two zones, $Z_1$ and $Z_2$, and that we have the following data:
\[
(Z_1,t_1,3),\; (Z_2,t_1,1),\; (Z_1,t_2,2) ,\; (Z_2,t_2,2).
\]
Intuitively, it seems obvious to expect that one user went from Zone 1 to Zone 2, whereas all other stayed put. 
Of course, it is also possible that two users went from Zone 1 to Zone 2 and the sole user in Zone 2 went to Zone 1. 
However, if we associate a \textit{cost} to users moving, then the first, intuitive solution, is clearly the optimal one.

%This can be encoded in a flow matrix $ = \{ t_{o,d} \}$:
%\begin{align}
%    t_{o,d} = \prob(\textrm{arrival at destination } d \textrm{ in } 1 \textrm{ time-step} | \textrm{starting from origin } o )
%\end{align}
%whose relationship to the usual origin-destination (OD) matrices used in transportation engineering we will
%clarify shortly.

Assume now, that there is a third zone, $Z_3$, and that we additionally have the following data:
\[
	(Z_3, t_1, 1), \; (Z_3, t_2, 1).
\]
A natural assumption is now that the user in Zone 3 simply stayed there. But what if Zone 3 is connected to Zones 1 
and 2 through rapid transit, whereas the only convenient way of getting from Zone 1 to Zone 2 is through Zone 3, and this 
journey takes longer than $t_2-t_1$?  In this case, the cost of travel from $Z_1$ to $Z_2$ would be quite high, and the 
most reasonable and optimal solution would be to expect one user to move from $Z_1$ to $Z_3$ and one user to move from $Z_3$ to $Z_2$.

This example illustrates the idea of optimal user movement and highlights, how knowledge of the transportation system needs
 to be incorporated in the design of the cost function. 
One can, for instance, benefit from the estimates of the travel-times provided by Google Maps \cite{Wang2011}, 
 use the Euclidean distance, or a discrete metric, in which adjacent pairs of zones have cost 0 and all other pairs have cost 1. 
Either way, we will solve that problem by casting it as a linear program, which will be formally defined in the next section.

Finally, in Problem \ref{prob3}, note that we treat the travel time of all users of the road network as a
 random variable $L$. 
For this random variable $L$, one assumes there exists probability density function $f_X$, i.e., a non-negative 
%Lebesgue-integrable 
function:
\begin{align}
\Pr[a\leq L\leq b]=\int _{a}^{b}f_{L}(x)\,dx.
\end{align}
On the input, we assume a histogram approximation thereof, with the bins of the histogram centered at multiples
of the interval at which we sample the input data, i.e., $t_2 - t_1 = t_{k+1} - t_{k}$.
For example for the sampling at 1 minute, we would be given a list:
\begin{align}
\Pr[0 \leq L < 1.5 ], \Pr[1.5 \leq L < 2.5 ], \ldots
\end{align}
We note that such distributional data are regularly obtained by most operators of public transport, 
either by surveys
or by studying traces of individuals using personal seasonal tickets,
although the alignment of the bins may not readily match the sampling interval of the presence data.

\section{The Linear Program}

First, we show that Problem \ref{prob1} can be solved by a linear program. 

\subsection{Notation and preliminaries}

Throughout, $\integers_+$ denotes the nonnegative integers,
$n$ and $k$ are positive integers, $\one_{n}$ denotes an $n$-dimensional column vector of all $1$s, and $I_n$ is the $n$-dimensional identity matrix; the dimension $n$ is omitted if it is clear from the context.
We denote matrices by capital letters, and their elements by the corresponding lower-case letters, e.g.\ for a matrix $M$, $m_{ij}$ denotes the element in the $i^\text{th}$ row, $j^\text{th}$ column; $M^T$ denotes the transpose of $M$; $\tr(M)=\sum_i m_{ii}$ denotes the trace of the square matrix $M$.
Note that with this notation, we obtain the vector $\eta$ of row-sums of $M$, i.e.\ $\eta$ with $\eta_i = \sum_j m_{ij}$, as $\eta=M\one$ and analogously we have $\gamma^T = \one^T M$ for the vector of column-sums.

To keep the notation compact, we consider user movement between two consecutive time intervals only; else, we'd have to include an index corresponding to the time stamp in everything, which is unnecessary for the developments. In other words, we consider a data set of the form
\begin{align}
	(Z_1,t_1, E_{11}),\; & (Z_2,t_1,E_{21}),\dotsc, \\
	& (Z_{N_z},t_1,E_{N_z 1}) ,\;(Z_1,t_2, E_{12}),\dotsc, (Z_{N_z},t_2,E_{N_z 2}). \notag
\end{align}

Let $E_\ell$ denote the (column) vector of users present in all zones at time $t_\ell$, i.e.\ $E_1 = \bmat{E_{11}&E_{21}&\dotsm&E_{N_Z 1}}^T$ and so on.
The number of users moving from $Z_i$ to $Z_j$ is denoted by $x_{ij}$ and the matrix of flows is $X\in\reals^{N_z \times N_z}$; 
this is the quantity we are interested in finding. We denote the costs of moving from $Z_i$ to $Z_j$ by $c_{ij}\in\reals$ and collect 
them in the matrix $C\in\reals^{N_z \times N_z}$.

\subsection{Cost functions}
\label{costfunctions} 

The costs of moving from $Z_i$ to $Z_j$ is, in some sense, a parameter to the model.
We have conducted our experiments with three natural choices of the cost matrix $C\in\reals^{N_z \times N_z}$,
but we do not claim these are original.

Let us have each zone $Z_i$ associated with a polygon, which is defined by a sequence of corner-points in the Euclidean plane.
Let us denote $k_i$ corner-points associated with $Z_i$ by $(P_{i,1}^x, z_{i,1}^y), (P_{i,2}^x, P_{i,2}^y), \ldots, (P_{i,k_i}^x, P_{i,k_i}^y)$.
In the adjacency metric, we consider:
\begin{align}
    \Cadj(Z_i, Z_j) := \begin{cases}
    \; 0 & \quad \textrm{if} i = j \\
    \; 0.1 & \quad \textrm{if } \exists  1 \le k_1 \le k_{i}, 1 \le k_2 \le k_j \\
        & \textrm{ s.t. } \exists (P_{i,k_1}^x, P_{i,k_1}^y) = (P_{j,k_2}^x, P_{j,k_2}^y) \\
    \; 1 & \quad \textrm{otherwise} 
    \end{cases} 
\end{align}
Considering the centroid of $Z_i$:
\begin{align}
  \textrm{centroid}(Z_i) := \left( \frac{\sum_{j=1}^{k_i} P_{i,j}^x}{k_i} ,  \frac{\sum_{j=1}^{k_i} P_{i,j}^y}{k_i} \right)
\end{align}
The centroidal distance $\Ccentr(Z_i, Z_j)$ is the Euclidean distance between centroids 
 $\textrm{centroid}(Z_i)$ and $\textrm{centroid}(Z_j)$
of the two zones.
Finally, considering the closest corner-point associated with the other zone:
\begin{align}
  \textrm{NN}(Z_i, Z_j) & := \arg \min_{(P_{i,k_1}^x, P_{i,k_1}^y) 1 \le k_1 \le k_i } \min_{ (P_{j,k_2}^x, P_{j,k_2}^y) 1 \le k_2 \le k_i } 
  D(P_{i,k_1}, P_{j,k_2}) \notag \\
  D(A, B) & := \sqrt{ 
  (A^x - B^x)^2
  + (A^y - B^y)^2
  }. 
\end{align}
Distance $\Cclosest(Z_i, Z_j)$ is then the Euclidean distance between $\textrm{NN}(Z_i, Z_j)$ and $\textrm{NN}(Z_j, Z_i)$. 
One can clearly define many other cost matrices, e.g., considering free-flow travel times, and introducing randomisation. 

\subsection{Constraints assuming a constant number of users over time}
\label{constantnousers} 

As a first step, assume that $\sum_{i=1}^{N_z} E_{i1} = \sum_{i=1}^{N_z} E_{i2} = N$, i.e.\ that the number $N$ of users in the entire network is the same in both time intervals. In order for a flow $X$ to explain the observations, it must ``remove'' $E_{i1}$ users from, and ``place'' $E_{j2}$ users back in each zone $Z_i$ (note that the number of users staying in $Z_i$ is $x_{ii}$). The number of users moving \textit{from} $Z_i$ is then given by the row-sum $\sum_j x_{ij}$, whereas the number of users moving \textit{to} $Z_i$ is given by the column-sum $\sum_k x_{ki}$. Of course, flows have to be nonnegative, too. Using the above notations, this translates to 
\begin{align}
\label{constantnousers1}
	X\one & = E_1 \\
	X^T \one & = E_2\\
	X & \geq 0. \label{constantnousers3}
\end{align}
We note here that this set of constraints defines what is known as the \emph{$(N_z,N_z)$-transportation polytope with 
marginals $E_1$ and $E_2$.} A further constraint can be added: in reality, only integer numbers of users can move, 
hence we could also require $X\in\integers^{N_z\times N_z}$. Since we are only approximating anyway, this might or 
might not be a good idea. Note also Lemma~\ref{lem:TP}, which suggests that for integer marginals, the 
minimizer is also integral.

The total cost associated with a flow $X$ is $\sum_{i=1}^{N_z}\sum_{j=1}^{N_z} c_{ij} x_{ij} = \tr(C^T X)$, and we find the flow $X$ minimizing the overall cost -- and hence an estimate of the true movement of users between zones -- as the solution to the following optimization problem:
\begin{equation}\label{eq:LP1}\tag{LP-C}
	\begin{aligned}
	\text{minimize } &\qquad& & z  = \trace(C^T X) \\
	\text{subject to } &&& %\left\{
	 \begin{aligned}
	X &\geq0 \\% & x_{ij}&\in\integers\\
	X\one &= E_1 & X^T \one &= E_2
	.
	\end{aligned}% \right.
	\end{aligned}
\end{equation}

%The polytope has been studied extensively \cite{schrijver2002,DeLoeraKim2014} over the past 70 years. 
%Where convenient, we will rewrite~\eqref{eq:TP} more compactly as 
%$X^T \one_p = \eta, X \one_q = \gamma$.

\subsection{Constraints allowing for time-varying numbers of users}

We cannot assume that the number of users stays constant: Users are arriving in the covered area and leaving it, and of 
course devices are being turned off and on. To address that, we introduce a  sink/source zone and index it by $N_z+1$. For this zone we have
%source and a sink and index them by $N_z+1$ and $N_z+2$, respectively. For those two zones, we have
\begin{align*}
	c_{i,N_z+1} & = \text{cost of user disappearing from $Z_i$} \\
	c_{N_z+1,i} & = \text{cost of user spawning in $Z_i$}
%	c_{i,N_z+2} & = \text{cost of user disappearing from $Z_i$} & c_{N_z+2,i} & = \infty.\\
\end{align*}

The number of users in the sink/source zone can only be computed after the data at $t_2$ is available. Then, we let 
\[
	\delta_{12} = \sum_{i=1}^{N_z} E_{i1} - \sum_{i=1}^{N_z} E_{i2}	
\]
and
\[
	E_{N_z+1,1} = u, \qquad E_{N_z+1,2} = u + \delta_{12},
\]
where selecting a parameter $u>0$ allows for consideration of users disappearing from $Z_i$ and spawning in $Z_j$ instead 
of travelling from $Z_i$ to $Z_j$. One might also consider adding another sink/source for this specific purpose to gain more 
control over the cost: for instance, if $c_{i,N_z+1}$ and $c_{N_z+1,j}$ are too small compared to $c_{ij}$, the optimization 
problem will always prefer disappearing/spawning over travel.

\section{The Matrix Manipulations}

Second, one should like go beyond a snapshot of transitions within a single interval between two points in time, at which the presence data are available.
We show that one can manipulate the solution of the linear program (LP) of the previous section
to derive first the $k$-step transitions of Problem \ref{probK} and subsequently the solutions to 
Problems \ref{prob2} and \ref{prob3} by straightforward matrix manipulation.

\subsection{Extrapolating the most recent one-step transitions} 
\label{matrixpowers}
%TODO: Raise to some power.

One possible solution to Problem \ref{probK} involves considering the output of the LP
 as a matrix and raising it to the appropriate power.
Let us return to our example, where we had
\[
(Z_1,t_1,3),\; (Z_2,t_1,1),\; (Z_1,t_2,2) ,\; (Z_2,t_2,2).
\]
We expect the flow of users to be $X = \left[\begin{smallmatrix}
2 & 1 \\ 0 & 1
\end{smallmatrix}\right]$, in other words two users remain in $Z_1$, one goes from $Z_1$ to $Z_2$, and the user already in $Z_2$ remains there. Another interpretation is in terms of proportions: One third of the users in $Z_1$ move to $Z_2$, whereas two thirds stay put, and so on. Mathematically, that corresponds to normalizing each column of $X$ to sum up to one (i.e.\ making $X$ into a row-stochastic matrix), say  $\tld{X}= \left[\begin{smallmatrix} \frac{2}{3} & \frac{1}{3} \\ 0 & 1 \end{smallmatrix}\right]$.

If this trend were to continue, then we'd have for the number of users in $Z_1$ and $Z_2$ at time $t_3$:
\begin{align*}
E_{13} & = \tld{x}_{11} E_{12} + \tld{x}_{21} E_{22} & \text{ and }& &
E_{23} & = \tld{x}_{12} E_{12} + \tld{x}_{22} E_{22}.
\end{align*}

This can be written compactly, and more generally, as the matrix-vector product
$
	E_{\ell+1} = \tld{X}^T E_\ell,
$
and with $E_1$ given, we have $E_2=\tld{X}^TE_1$ and $E_3=\tld{X}^TE_2=\tld{X}^T\tld{X}^TE_1$ and eventually we get 
\[
	E_{\ell+1} = \left(\tld{X}^\ell\right)^T E_1.
\]

In the context of probability distributions (where the elements $E_{k\ell}$ denote probability mass instead of users), this is a well-known and thoroughly researched Markov chain, and all results apply here. The most important one is that, should the trend continue forever, the system will (under mild conditions) settle into a steady state. More specifically, there is a distribution of users $f$ such that $\tld{X}^Tf=f$ and as $\ell\rightarrow\infty$, we have $E_\ell\rightarrow f$.

%It seems obvious that if this pattern of movement were to continue, 
%all the activity would move to the second zone, eventually:
%
%\[
%(Z_1,t_4,0) ,\; (Z_2,t_0,4).
%\]
%
%Should this be realistic, it is clearly possible to raise the one-step
%transition matrix $T$
%to a suitable power $k$ to obtain the $k$-step transitions $T^{(k)} = 
%\{ t_{o,d}^{(k)} \}$:
%
%\begin{align}
%    t_{o,d}^{(k)} = \prob(\textrm{arrive at destination } d \textrm{ in } k \textrm{ time-steps } | \textrm{ starting from origin } o )
%\end{align}
%
%and from the basics of the theory of Markov chains \cite{Shorten2015},
% it is clear that $t_{i,j}^{(k)}$ is the sum of
% probabilities of travel along all paths from $i$ to $j$ along paths of length $k$ steps.
%The related Chapman-Kolmogorov equation has a solution
%\begin{align}
%    T^{(k)} = \begin{cases}
%    T^k & n \ge 2 \\
%    T   & \textrm{otherwise}
%    \end{cases}
%\end{align}
%  where $T^k$ is the usual matrix power. 

\subsection{Extrapolating $k$ recent one-step transitions}

If instead of just event counts at two time steps, we have event counts at several consecutive time steps, 
another solution to Problem \ref{probK} would be to multiply by the estimated one-step flow matrices in turn. To return to our example, 
let us extend it to:
\[
(Z_1,t_1,3),\; (Z_2,t_1,1),\; (Z_1,t_2,2) ,\; (Z_2,t_2,2),\; (Z_1,t_3,3),\; (Z_2,t_3,1).
\]
The flows between times $t_1$ and $t_2$ would be as before, and from $t_2$ to $t_3$ we'd have $X_2 = \left[\begin{smallmatrix}
2 & 0 \\ 1 & 1
\end{smallmatrix}\right]$, or after normalization:
\begin{align*}
	\tld{X}_1 &= \bmat{\frac{2}{3} & \frac{1}{3} \\ 0 & 1} & \text{and}&& 
	\tld{X}_2 &= \bmat{1 & 0 \\ \frac{1}{2} & \frac{1}{2}}.
\end{align*}

Now the estimate for $E_4$ would be $E_4 = \tld{X}_1^TE_3 = \tld{X}_1^T\tld{X}_2^TE_2=\tld{X}_1^T\tld{X}_2^T\tld{X}_1^TE_1$ and in general
\[
E_{\ell+1} = 
	\begin{cases}
		\left(\tld{X}_1^T \tld{X}_2^T\right)^{\ell/2} E_1 & \text{if } \ell \text{ even}\\
		\left(\tld{X}_1^T \tld{X}_2^T\right)^{(\ell-1)/2} \tld{X}_1^T E_1 & \text{if } \ell \text{ odd.}
	\end{cases}
\]

%It seems obvious that if this pattern of movement were to continue, 
%the small fluctuations would continue.
%\comment{this is not entirely true, because the 
Because the product of row-stochastic matrices is again row-stochastic, this new Markov chain
may have a stationary distribution appearing every $k$ time steps.

%Should this be realistic, it is again possible to obtain the $k$-step 
%transitions $T^{(k)} = \{ t_{o,d}^{(k)} \}$ by matrix multiplication. 
The apparent difficulty with a naive implementation of these approaches is runtime, in particular when $k$ is large.
As we will show in the next section, there are surprisingly efficient algorithms, though.

\subsection{Approximating duration-$L$ transitions}

%\begin{problem}[Duration-$L$ Transitions]
%    \label{prob2}
%Given aggregate presence data for disjoint geographical zones, for two or more points $t_1, t_2, \ldots$ in time, 
%and a cost function mapping each pair of geographical zones to a real number, 
%and a given trip duration $L$, 
%estimate for each zone the proportion of users travelling travelling to each other zone,
%with trip duration being $L$. 
%\end{problem}

In order to solve Problem \ref{prob2}, it suffices to consider a convex combination of 2 solutions to Problem \ref{probK}
for suitable $k$.
Consider the computation of duration-$L$ transitions, where there exist presence data at times $t_{k}, t_{k+1}$ with $t_{k} \le L \le t_{k+1}$.
Consider the $k$-step transition matrix $S_k$ and $(k+1)$-step transition matrix $S_{k+1}$ computed as a solution to Problem \ref{probK}. Clearly,
\begin{align}
S = \frac{L - t_k}{t_{k + 1} - t_k} S_k + \frac{t_{k+1} - L}{t_{k + 1} - t_k} S_{k+1} 
\end{align}
is the solution to Problem \ref{prob2}.

%Read till here. (Jonathan)
%\hrulefill

\subsection{Approximating realistic transitions}
\label{realistictransitions}

%\begin{problem}[Realistic Transitions]
%    \label{prob3}
%Given aggregate presence data for disjoint geographical zones, for two or more points $t_1, t_2, \ldots$ in time, 
%and a cost function mapping each pair of geographical zones to a real number, 
%and a distribution of trip durations, as a histogram approximation of a probability density function $f_L$, 
%estimate for each zone the proportion of users travelling travelling to each other zone.
%\end{problem}

Finally, in order to solve Problem \ref{prob3}, one has to notice that the
solution is a convex combination of $H$ solutions to Problem \ref{probK}
for $H$ bins of the histogram approximation.
Consider the $H$ bins of the histogram with values: 
\begin{align}
h_1 & = \Pr[0 \leq L < 1.5 ] \notag \\
h_2 & = \Pr[1.5 \leq L < 2.5 ] \notag  \\
& \ldots  \notag  \\ 
h_H & = \Pr[\max{\supp{L}} - 1 \leq L < \max{\supp{L}} ], \notag 
\end{align}
where $\max{\supp{L}}$ is the largest possible realisation of $L$.
It is clear that $h_i, 1 \le i \le H$ can be seen as a probability mass function on a discretisation of L,
with $\sum_{i = 1}^{H} h_i = 1$.
One can use $h_i$ as weights in a convex combination of the transition matrices, which is a sum.
For the first bin, we consider a solution $S_1$ of Problem \ref{prob1} with weight $h_1$ as the first summand.
For the second bin, we consider a 2-step transition matrix $S_2$ computed as a solution to Problem \ref{probK} with weight $h_2$ as the second summand.
Subsequently, we consider $k$-step transition matrix $S_k$, $2 < k \le H$ computed as a solution to Problem \ref{probK} with weight $h_k$
to obtain:
\begin{align}
S = \sum_{i = 1}^{H} (h_i S_i),
\end{align}
which is the solution to Problem \ref{prob3}.

\section{Run-Time Analysis}

\begin{table}[t]
\begin{tabular}{l|l|l}
    \toprule 
    Method                                & Run-time                    & Ref. \\
    \midrule
    General-purpose interior-point method & $O( n^7 \ln(1/\epsilon) ) $ & \cite{gondzio2012} \\
    Hungarian method                      & $O( n^4 )$                  & \cite{kuhn1955}  \\
    Augmenting-path algorithm             & $O( n^3 )$                  & \cite{jonker1987}  \\
    \midrule
    The proposed algorithm            & $O(n)$                    & Theorem \ref{thm:tracemax} \\
    \bottomrule
\end{tabular}
\caption{The complexity of solving the linear program \eqref{eq:LP1} in dimension $n \times n$.
We note that it makes it possible to obtain the objective function value at cost $O(n)$, while 
retrieving the $n \times n$ matrix, at which this value is attained requires time $O(n^2)$. }
\end{table}

\subsection{Linear Programming}

Next, let us consider whether a faster algorithm for solving the class of linear programming problems \eqref{eq:TP} are possible. We make use of:

\begin{Lem}	[E.g.\ {\cite[Lemma~2.2 and Corollary~2.11]{DeLoeraKim2014}}]\label{lem:TP}
For the $(p,q)$ transportation polytope defined by $\gamma$ and $\eta$ we have:
	\begin{enumerate}%[topsep=-3ex]
		\item it is nonempty if and only if $\one^T_q \eta=\one^T_p \gamma$, in other words if the sums of the marginals are equal;
		\item all its vertices are integral, if $\eta\in\integers^q$ and $\gamma\in\integers^p$, i.e.\ if the marginals are integral.
	\end{enumerate}
\end{Lem}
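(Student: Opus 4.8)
The plan is to treat the two parts separately, as part~1 is an elementary consistency check while part~2 rests on an integrality property of the constraint matrix. For part~1, necessity follows by double counting: if $X$ lies in the polytope, then summing its entries in two ways gives $\one^T\eta=\one^T(X\one)=(\one^T X)\one=\gamma^T\one$, so the two marginal totals must coincide. For sufficiency I would assume this common total $N\coloneq\one^T\eta=\one^T\gamma$ together with nonnegativity of the marginals (implicit, since they are row- and column-sums of a nonnegative matrix), and exhibit an explicit feasible point by the outer-product construction $x_{ij}=\eta_i\gamma_j/N$ when $N>0$, and $X=0$ when $N=0$. A direct check then gives $X\one=\eta$, $X^T\one=\gamma$ and $X\geq0$, so the polytope is nonempty.

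For part~2, the key step is to recognise the constraint matrix. After vectorising $X$, the equalities $X\one=\eta$ and $X^T\one=\gamma$ read $A\,\mathrm{vec}(X)=\bigl[\begin{smallmatrix}\eta\\\gamma\end{smallmatrix}\bigr]$, where $A$ is the $(p+q)\times pq$ incidence matrix of the complete bipartite graph $K_{p,q}$: the column indexed by $(i,j)$ has a single $1$ in row $i$ of the row-sum block and a single $1$ in row $j$ of the column-sum block, and zeros elsewhere. I would then show $A$ is \emph{totally unimodular} by induction on the order of a square submatrix $B$. If $B$ has a zero column, then $\det B=0$; if some column of $B$ carries a unique $1$, Laplace expansion along it reduces to a smaller submatrix covered by the induction hypothesis; and if every column of $B$ contains both of its $1$s, then the sum of the rows of $B$ drawn from the row-sum block equals the sum of the rows drawn from the column-sum block, a linear dependence forcing $\det B=0$. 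Hence every square submatrix of $A$ has determinant in $\{0,\pm1\}$.

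Integrality of the vertices then follows from the standard Hoffman--Kruskal argument: each vertex is a basic feasible solution $x_B=A_B^{-1}b$ for some nonsingular square submatrix $A_B$, and since $\det A_B=\pm1$ while $b=\bigl[\begin{smallmatrix}\eta\\\gamma\end{smallmatrix}\bigr]$ is integral by hypothesis, Cramer's rule renders $x_B$ integral, the non-basic coordinates being $0$. The main obstacle is the total-unimodularity step; the remaining parts are routine. Since the statement is quoted from \cite{DeLoeraKim2014}, one could alternatively cite the well-known total unimodularity of bipartite incidence matrices (Heller--Tompkins) rather than reprove it, but the short determinant induction above keeps the argument self-contained.
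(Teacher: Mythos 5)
Your proof is correct, but note that the paper does not actually prove Lemma~\ref{lem:TP}: it is quoted from \cite{DeLoeraKim2014} and used as a black box (in particular, inside Step~2 of the proof of Theorem~\ref{thm:tracemax} to certify that the residual $(m,n-m)$ transportation polytope is nonempty). So there is no in-paper argument to compare against; what you have supplied is a self-contained proof of the cited facts, and it is the standard one. Part~1 is fine: the double-counting identity $\one^T\eta=\one^T X\one=\gamma^T\one$ gives necessity, and the outer-product point $x_{ij}=\gamma_i\eta_j/N$ gives sufficiency (you have the roles of $\gamma$ and $\eta$ swapped relative to Definition~\ref{def:TP}, where $\gamma$ carries the row sums and $\eta$ the column sums, but this is pure bookkeeping; do make explicit that nonnegativity of the marginals is being assumed, since for arbitrary real marginals with equal sums but a negative entry the polytope is empty). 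Part~2 via total unimodularity of the incidence matrix of $K_{p,q}$ plus Cramer's rule is the classical Heller--Tompkins/Hoffman--Kruskal route and your determinant induction is sound. The only loose end is that $A$ has rank $p+q-1$, not $p+q$, so ``nonsingular square submatrix $A_B$'' should be read as a basis of a row-reduced system (drop one redundant constraint) or as a maximal nonsingular square submatrix; this is routine but worth a sentence. As an aside, your construction in part~1 produces a generally non-integral interior point, whereas the paper's Theorem~\ref{thm:tracemax} needs an \emph{integral} feasible $Y$; that is exactly why the paper also invokes part~2 (integrality of vertices) rather than part~1 alone, a dependence your two-part proof correctly supports.
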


but we stress that the polytope has been studied very extensively over the past 70 years and refer to \cite{rachev1998mass,rachev1998mass2,schrijver2002} as
standard references.

Now we state the main result:

%\begin{Thm}\label{thm:tracemaxGeneral} Consider  
%$\gamma \in\integers_+^{n_1}$ and 
%$\eta \in\integers_+^{n_1}$, with 
%$\one^T_{n_1}\gamma= \one^T_{n_2}\eta = k$, 
%matrices $W \in \reals^{\Red{n_2 \times n_1}}$, $C\in\reals^{\Red{n_2\times n_2}}$ with weights $w_{i,i} \ge 0, c_{i,i}$ on the diagonal \Red{(but if $n_1\neq n_2$, what is the diagonal?)} and 0 elsewhere,
%and a non-decreasing function $f$, consider 
%\begin{equation}\label{eq:IP1}\tag{IP1}
%  z' := \max_{X \in\integers_+^{n_1 \times n_2}} \; f( \tr(W X + C) )  \;
%    \text{ s.t. } X \one_{n_1} = \gamma, X^T \one_{n_2} = \eta.
%\end{equation}
%%The maximum $z'$ as well as one $X$ where achieving it 
%are computable in time linear in $n$.
%\end{Thm}
% The proof is constructive and detailed in the appendix, only due to its length compounded by the requirements to prove the number of non-zeros in $X$ is linear, 
%and complications resulting from the non-square matrices ($n_1 \neq n_2$). Here, we prove a simpler version:
 
\begin{Thm}\label{thm:tracemax} Given $\eta, \gamma\in\integers_+^n$, with $\one^T\eta=\one^T\gamma=k$, consider
\begin{equation}\label{eq:ILP}\tag{IP2}
    z := \max_{X \in\integers_+^{n\times n}} \; \tr(X)  \;
    \text{ s.t. } X \one = \gamma, X^T \one = \eta.
\end{equation}
Then, $z = \sum_{i=1}^{n} \min\bigl\{	\eta_i,\gamma_i \bigr\}$.
This is computable in time linear in $n$.
\end{Thm}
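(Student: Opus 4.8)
The plan is to prove the two inequalities $z \le \sum_i \min\{\eta_i,\gamma_i\}$ and $z \ge \sum_i \min\{\eta_i,\gamma_i\}$ separately: the first by a pointwise bound on the diagonal entries, the second by an explicit feasible construction that also certifies integrality.

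First I would establish the upper bound, which is immediate. For any feasible $X$, nonnegativity of the entries gives $x_{ii} \le \sum_j x_{ij} = \gamma_i$ and simultaneously $x_{ii} \le \sum_j x_{ji} = \eta_i$, hence $x_{ii} \le \min\{\eta_i,\gamma_i\}$ for every $i$. Summing over $i$ yields $\tr(X) = \sum_i x_{ii} \le \sum_i \min\{\eta_i,\gamma_i\}$, and therefore $z \le \sum_i \min\{\eta_i,\gamma_i\}$.

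For the reverse inequality I would exhibit a feasible integral $X^\star$ attaining this value. Set the diagonal to $x^\star_{ii} = \min\{\eta_i,\gamma_i\}$, which leaves residual row marginals $r_i := \gamma_i - \min\{\eta_i,\gamma_i\} = (\gamma_i-\eta_i)_+$ and residual column marginals $s_j := \eta_j - \min\{\eta_j,\gamma_j\} = (\eta_j-\gamma_j)_+$ to be supplied by the off-diagonal entries. Two observations make the construction go through. First, $\sum_i r_i = \sum_j s_j$: since $\one^T\gamma = \one^T\eta$ we have $\sum_i(\gamma_i-\eta_i)=0$, and splitting into positive and negative parts gives $\sum_i(\gamma_i-\eta_i)_+ = \sum_i(\eta_i-\gamma_i)_+$. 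Thus $r$ and $s$ are equal-sum, nonnegative, integral marginals defining an $(n,n)$-transportation polytope, which is nonempty and possesses an integral vertex by Lemma~\ref{lem:TP}. Second, for each index $i$ at most one of $r_i,s_i$ is nonzero, as they are the positive and negative parts of the single number $\gamma_i-\eta_i$; in particular $r_i>0 \Rightarrow s_i=0$, so any plan routing the surplus $r_i$ of row $i$ solely into columns $j$ with $s_j>0$ necessarily uses $j\ne i$. Placing such an integral plan in the off-diagonal positions of $X^\star$ therefore never disturbs the prescribed diagonal, and the resulting $X^\star$ is feasible, integral, and satisfies $\tr(X^\star)=\sum_i \min\{\eta_i,\gamma_i\}$. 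Combining the two bounds gives $z=\sum_i\min\{\eta_i,\gamma_i\}$.

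The runtime claim then follows since $\sum_{i=1}^n \min\{\eta_i,\gamma_i\}$ is computed in a single pass over the coordinates, i.e.\ in $O(n)$ time (whereas materialising the full optimiser $X^\star$ costs $O(n^2)$, as noted in the table caption). The step I expect to require the most care is the achievability argument: specifically, verifying that the residual marginals balance and are disjointly supported, so that the off-diagonal routing can always avoid the diagonal. Once that is in place, the integrality of the optimiser is inherited directly from the integral-vertex property of Lemma~\ref{lem:TP}, and no further work is needed.
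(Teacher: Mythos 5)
Your proof is correct and follows essentially the same strategy as the paper's: the identical diagonal upper bound, followed by placing $\min\{\eta_i,\gamma_i\}$ on the diagonal and filling the residual marginals via a point of a transportation polytope whose existence and integrality come from Lemma~\ref{lem:TP}. Your residual formulation with $r=(\gamma-\eta)_+$, $s=(\eta-\gamma)_+$ and the disjoint-support observation is a mild streamlining of the paper's argument, which instead permutes the indices and works with an explicit $m\times(n-m)$ block, but the underlying construction is the same.
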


For the proof, please see the Appendix. % \ref{proof:tracemax}.

\subsection{Sparse Matrix Multiplication}

\begin{table}[t]
\begin{tabular}{l|l|l}
    \toprule 
    Method                                & Run-time                    & Ref. \\
    \midrule
    Standard dense matrix multiplication     & $O( n^3 )$                  & Standard \\
    Present-best dense matrix multiplication & $O( n^{2.3728639} )$        & \cite{LeGall2014}  \\
    Fourier-transform-based methods          & $\tilde O( k + nb )$         & \cite{Pagh2013}  \\
    \bottomrule
\end{tabular}
\caption{The complexity of multiplying two sparse matrices, each in dimension $n \times n$,
each with $k$ non-zero coefficients, where the product has $b$ non-zero coefficients. }
\end{table}

In addition to the complexity of the solving of the instance of the linear programming problem, 
we may need to solve a number of 
matrix-matrix multiplication problems, where the two matrices are sparse, 
each in dimension $n \times n$,
each with $k$ non-zero coefficients, 
and where the product has $b$ non-zero coefficients. 
Using traditional dense linear algebra, one can obtain the result in time $O(n^3)$, 
using more sophisticated methods for dense matrices, one can improve this to $O(n^\omega), \omega \approx 2.37$.

Considering the sparsity of matrices $A, B$, however, Pagh \cite{Pagh2013} has shown methods that there 
are based on conversion to polynomial:
$$p(x) = \sum_{k=1}^n (\sum_{i=1}^n A_{ik} s_1(i) x^{h_1(i)}) (\sum_{j=1}^n B_{kj} s_2(j) x^{h_2(j)}) $$
and utilisation of fast Fourier transform for polynomial multiplication,
which can compute $c_0,...,c_{b-1}$ such that $\sum_i c_i x^i = (p(x) \mod x^b) + (p(x) \div x^b)$ in time $\~O(n^2+ n b)$,
with ${(AB)}_{ij} = s_1(i) s_2(j) c_{(h_1(i)+h_2(j))} \mod b$.
%An unbiased estimator of $(AB)_{ij}$ with variance at most $||AB||_F^2 / b$, where $||AB||_F$ denotes the Frobenius norm of $A$,
% can then be computed as:
%$C_{ij} = s_1(i) s_2(j) c_{(h_1(i)+h_2(j))} \mod b$
Eventually, Pagh \cite{Pagh2013} shows that the algorithm computes $AB$ exactly in time $\~O(N + nb)$ with high probability.
Although these algorithms may not be easy to implement,  
related algorithms with simpler hashing functions are readily implementable.

\section{Computational Experiments}
\label{experiments}

%\maketitle              % typeset the title of the contribution

%
%\subsection{Data description}

%An important factor of this procedure is a protection of a user's privacy. The data are never 
%collected in the other form that statistics describing a load of a base transceiver station (BTS).

\begin{table}[t!]
\caption{Types of registered events: First two types are generated by a user (``active''), while the following eight are generated by the network
 (``passive'').}\label{tab:events}
%\begin{tabular}{|p{2cm}|p{10.5cm}|}
%\hline
\begin{itemize}
\item A cell identifier change reported in the access request procedure of any mobile originated (MO) or mobile terminating (MT) transaction % of the mobile station (MS) 
\item A cell identifier change reported when a cell identifier different from the one stored in the visitor location register (VLR)
 is used in response to any mobile terminating (MT) transaction % when in the page response, the cell identifier
\end{itemize}
% \hline
\begin{itemize}
\item  Location update performed by a mobile station
\item  International mobile subscriber identity (IMSI) attached to a mobile station changes
\item  A subscriber is deleted from the visitor location register 
\item  The subscriber switches off the mobile station
\item  Mobile station is considered as detached by the network after a long period of inactivity
\item  Cell identifier change is detected during GPRS activity
\item  Cell identifier change is detected during the provision of subscriber information %procedure for a GPRS subscriber.
\item  During long calls, the visitor location register (VLR) may receive request messages to refresh subscriber data.
In this way, the VLR notices that the cell identifier arriving in the message differs from the one stored in a database.
 \end{itemize}
% \hline
%\end{tabular}
\end{table}

For our preliminary experiments, we have used data collected from the Public Land Mobile Network (PLMN) of Orange Polska 
within the municipal area of Warsaw. 
For each base transceiver station (BTS) of each separate cellular system (2G/3G),
  we have received the total numbers of connections from unique terminals per a unit of time. 
No data other than the aggregate statistics were collected.
  
In particular, the data were collected by an internal transmission system, in which some types of network events are associated with a location.  
Such events can be subdivided into events generated by the user (``active'') or generated by the network (``passive''). 
In Table~\ref{tab:events}, the first two bullet points capture events generated by the user. 
These events relate to mobile terminating (MT) transactions, i.e., 
events registered during an active use of a mobile terminal by the user. 
The following bullet points list event types generated by the network. These events are generated without an active participation of the user
 and include operations of the so-called visitor location register (VLR), which tracks subscriber roaming within 
a mobile switching centre (MSC) location area, cell identifier changes, International Mobile Subscriber Identity (IMSI) changes, 
location updates performed by a mobile station, and the switching of mobile stations.

\begin{figure}[t]
\centering
\includegraphics[width=\columnwidth]{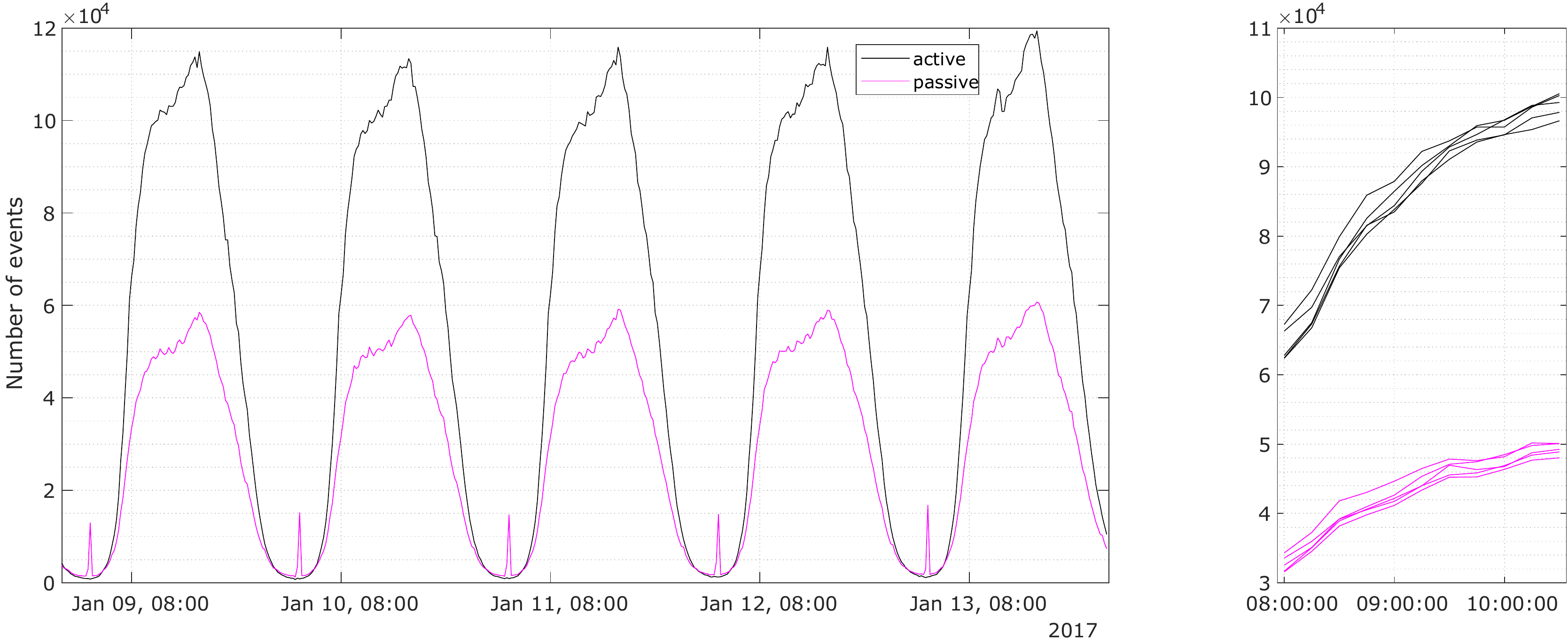}
\caption{Total number of events vs time stamps, split by passive (magenta) and active (black) events. The spike in passive events at 3:15am is an artefact of the data acquisition system's operation. 
}
\label{fig:events}
\end{figure}

\begin{figure}[t]
%\subfloat[active]{\includegraphics[width= 0.45\textwidth]{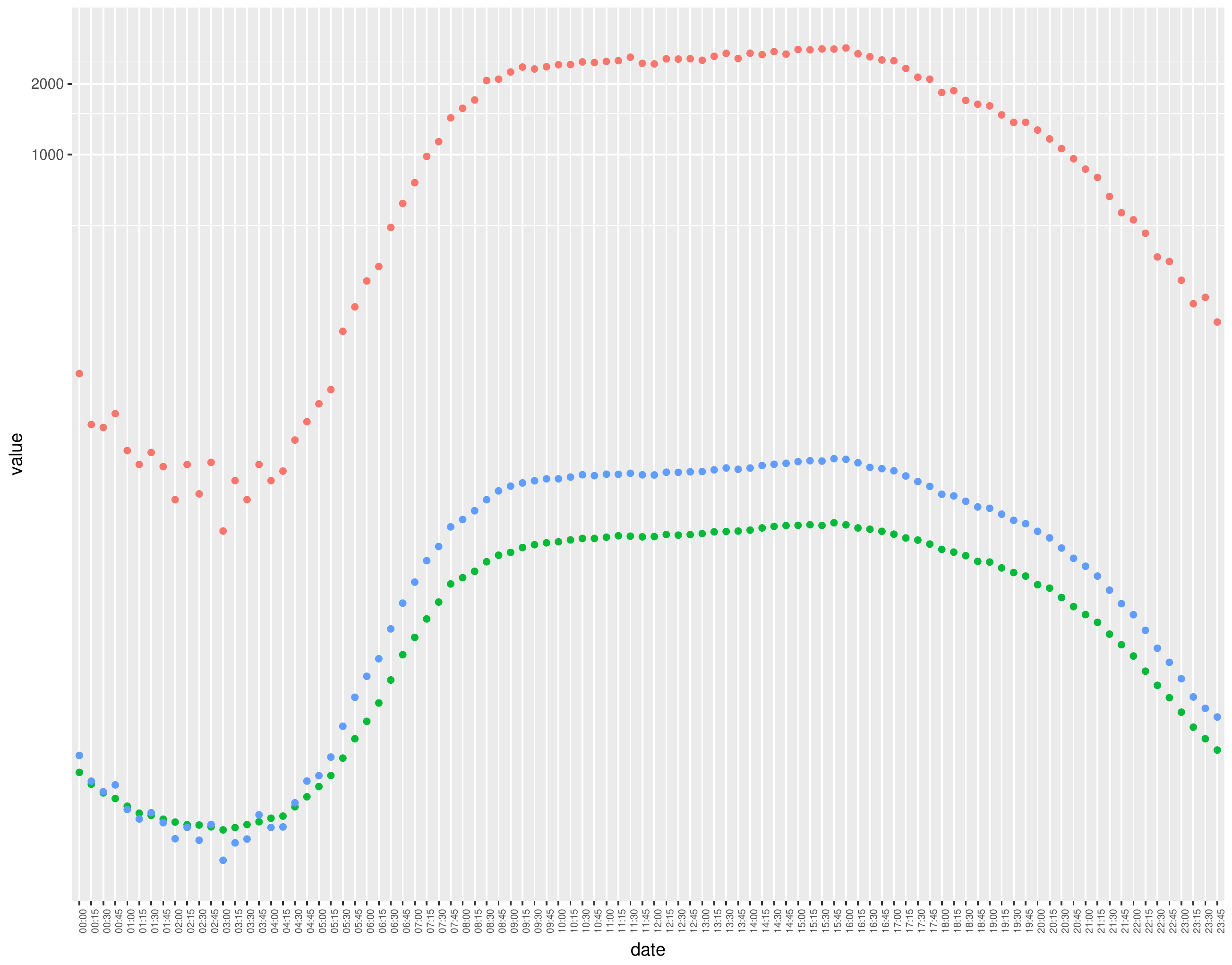}}
%\subfloat[passive]{\includegraphics[width= 0.45\textwidth]{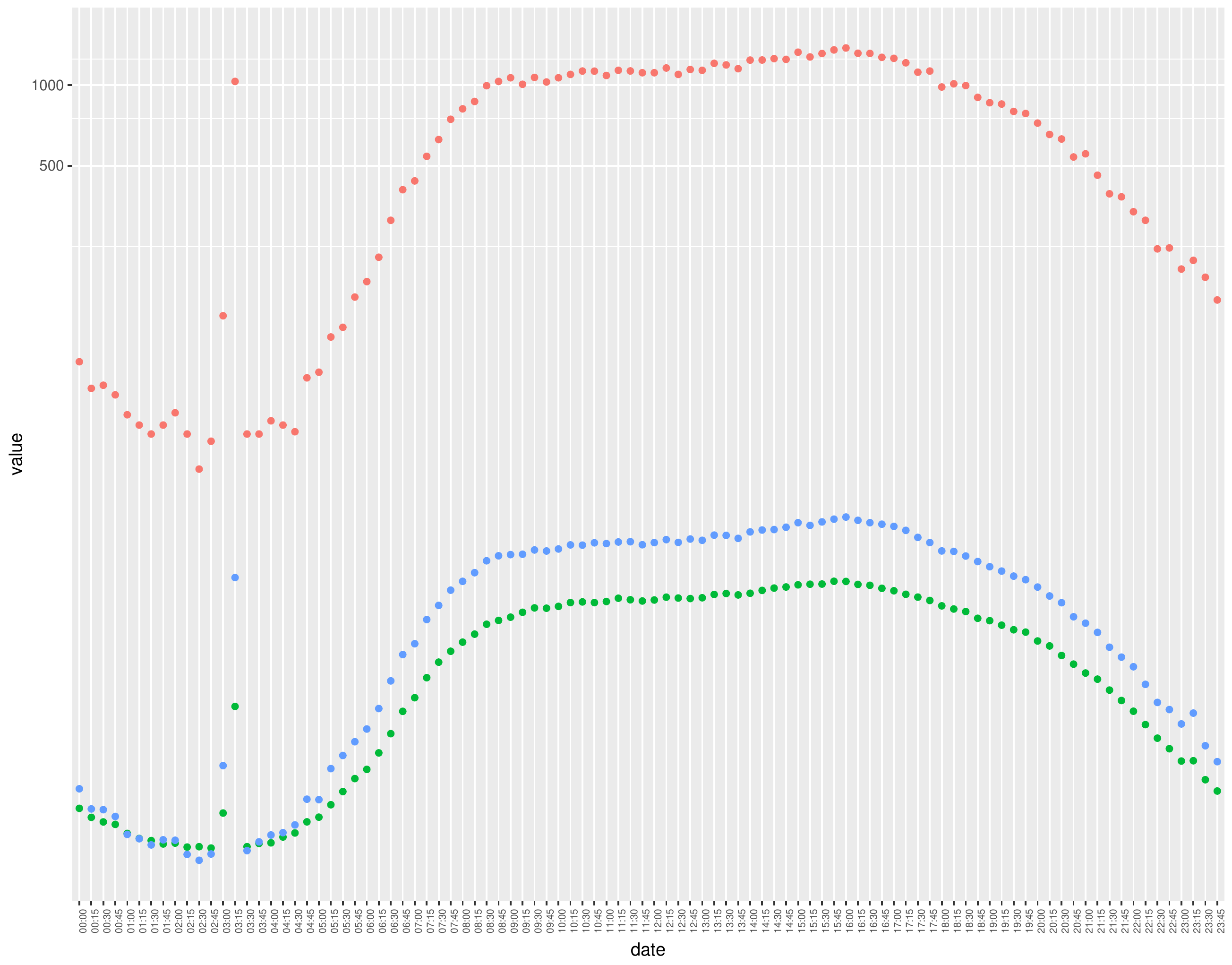}}
\includegraphics[width=\columnwidth]{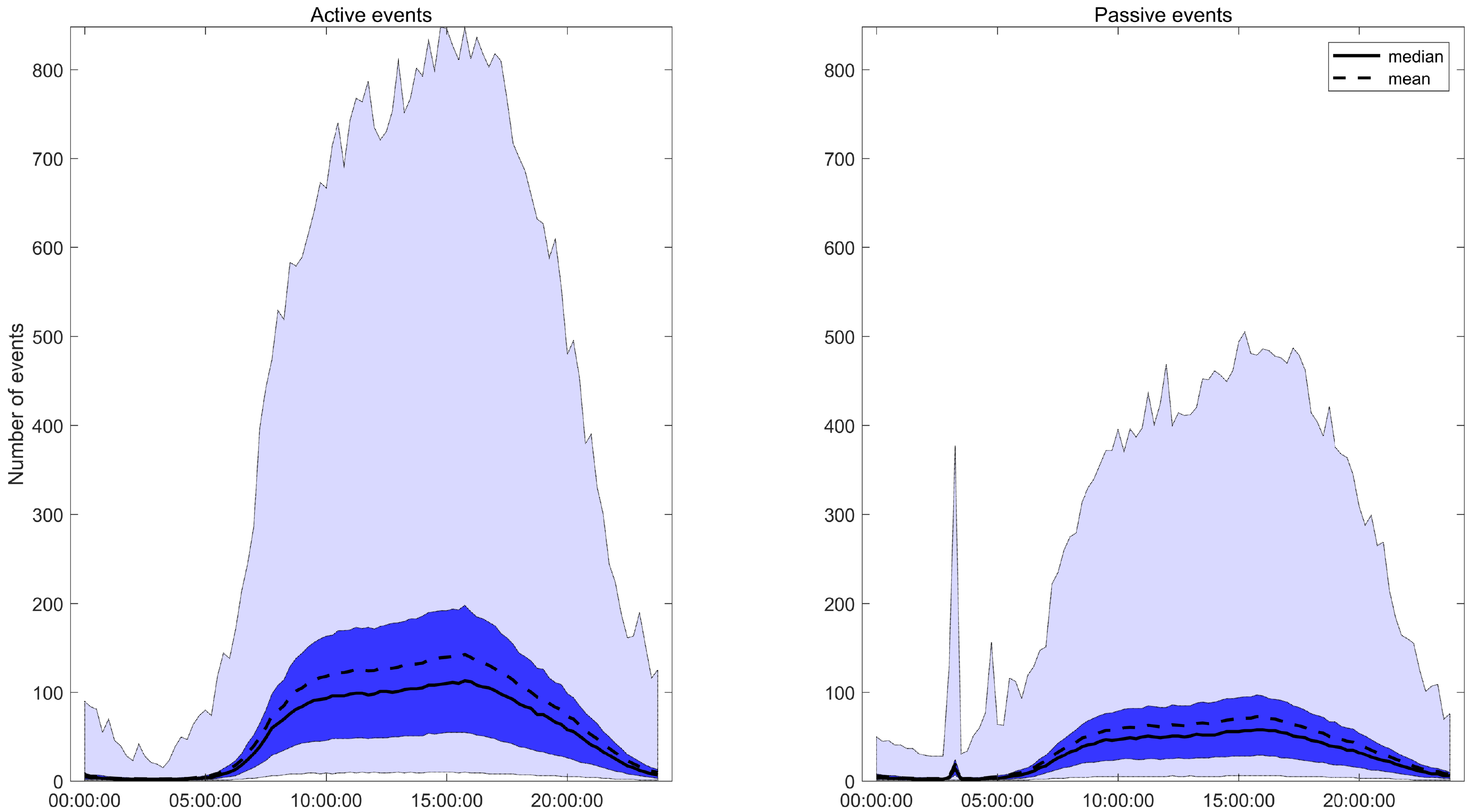}
\caption{The evolution of some statistics of the active (left) and passive (right) events observed. The statistics are computed over all BTSs and all 5 available days (Monday -- Friday), e.g.\ the mean at 9:00 is the mean of the number of events at all BTSs on all 5 days at 9am. The inner (dark blue) band is the interquartile range, whereas the outer (light blue) band extends between the 5\textsuperscript{th} percentile and the maximum event count at that time.}
\label{fig:stats}
\end{figure}

Within the one week of data collected, %without an observation each type of the events separately. 
the most frequent events were location updates, while the cell identifier change was very rare.
Figure~\ref{fig:events} presents the evolution of the total number of events during the period. 
The volumes of both active and passive events are correlated with the time of 
day; the cyclical nature of the evolution of the volumes of events over time is clear. 
The evolution of the numbers of passive events is more uneven than that of the active events. 
The local maxima of the numbers of passive events at 3:15 a.\ m.\ are related with the operations of the data acquisition and processing system.
%For the passive events, we can observe peaks generated by the system at  3:15 every day.
Further, note that the observed events are not equally distributed in the city. 
Figure~\ref{fig:stats} presents statistics of the number of events observed across all BTSs at a given time. 
The large difference between mean and median, the wide interquartile range, and the very large difference between
 maximum and mean number of events at all times suggest a large variability in the distribution of events over the BTSs. 
This disproportion is the result of differing population density, 
 differing ranges of base transceiver stations, and  
 differing telecommunication technologies.

%For a single day, we have analysed the share of each event in the total number of events. 
%The shares are presented in Table~\ref{tab:events}. 
%When the events generated by active subscribers occurs in a similar proportion, the distribution
% of automatically generated events is uneven. 
%The most frequent event is a localisation update, when the cell ID change is very rare.

% TODO: Desribe the rest of the pipeline ***

For comparison, we have used the estimates presently employed by the public transport operator in Warsaw (ZTM).
One should like to point out that the estimates of ZTM
 imply a rather different distribution of the population across the zones than the density of events within the Orange network.
See Figure \ref{fig:spatial} for the density of events within the Orange network for three representative time periods.
In Figure~\ref{fig:presence}, we compare it to the implied presence data within the ZTM estimate: with each data point represents 
one zone and the two axes correspond to the presence implied by ZTM for the morning peak 
 and the numbers of events within Orange network during the same time period, averaged over the 5 work days. 
As can be seen, %on the right, where we plot a measure of the error between the estimates of the public transport operator
 %and numbers of events,
 our model starts with rather different data, and any direct comparison of our estimates of trip distributions
 with the estimates of the public transport operator (ZTM) will hence be imperfect.
 
\begin{figure*}[t!]
	\centering
	\subfloat[6 a.\ m.\ ]{\includegraphics[clip, trim={2cm 2cm 2cm 2cm}, width= 0.33\textwidth]{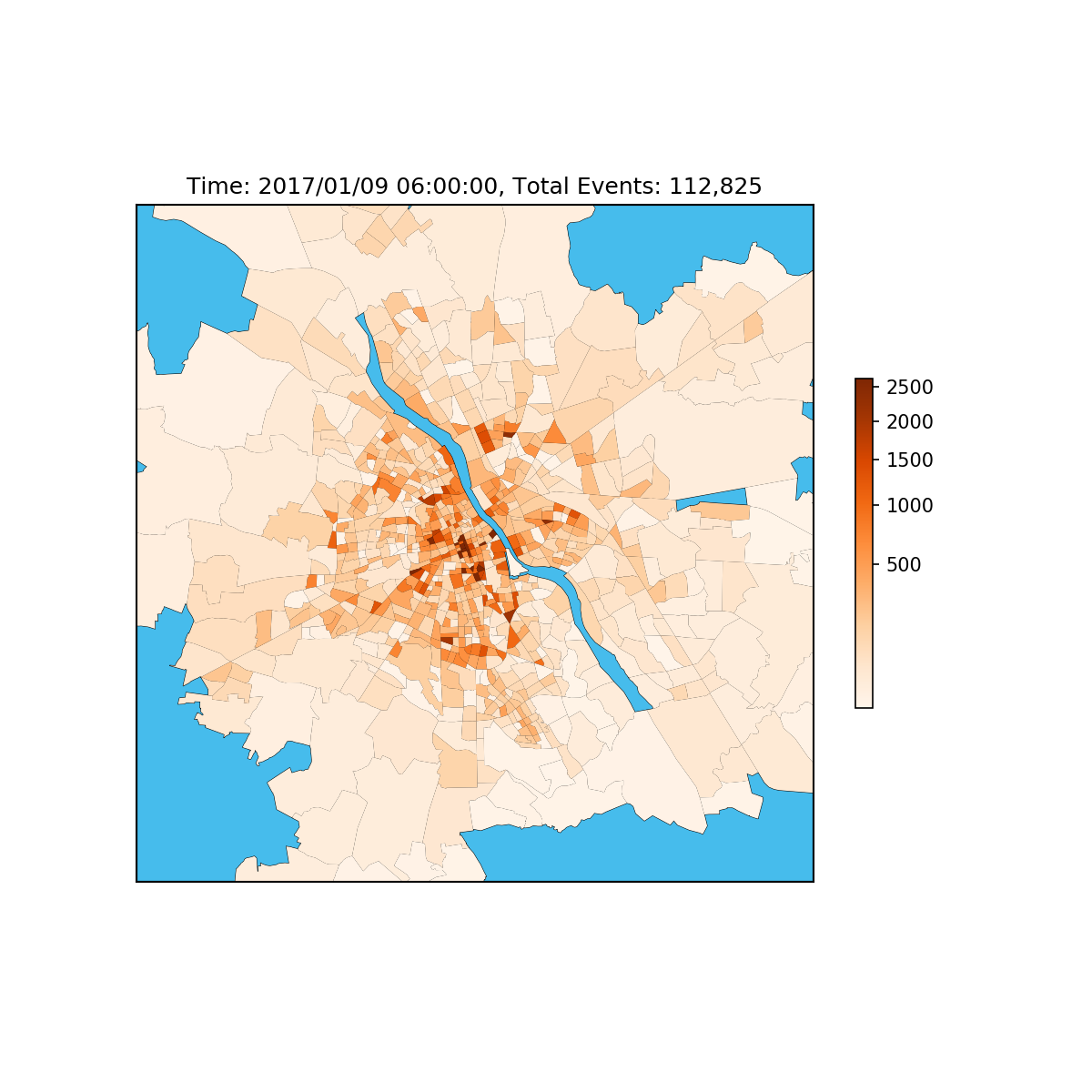}}
	\subfloat[9:30 a.\ m.\ ]{\includegraphics[clip, trim={2cm 2cm 2cm 2cm}, width= 0.33\textwidth]{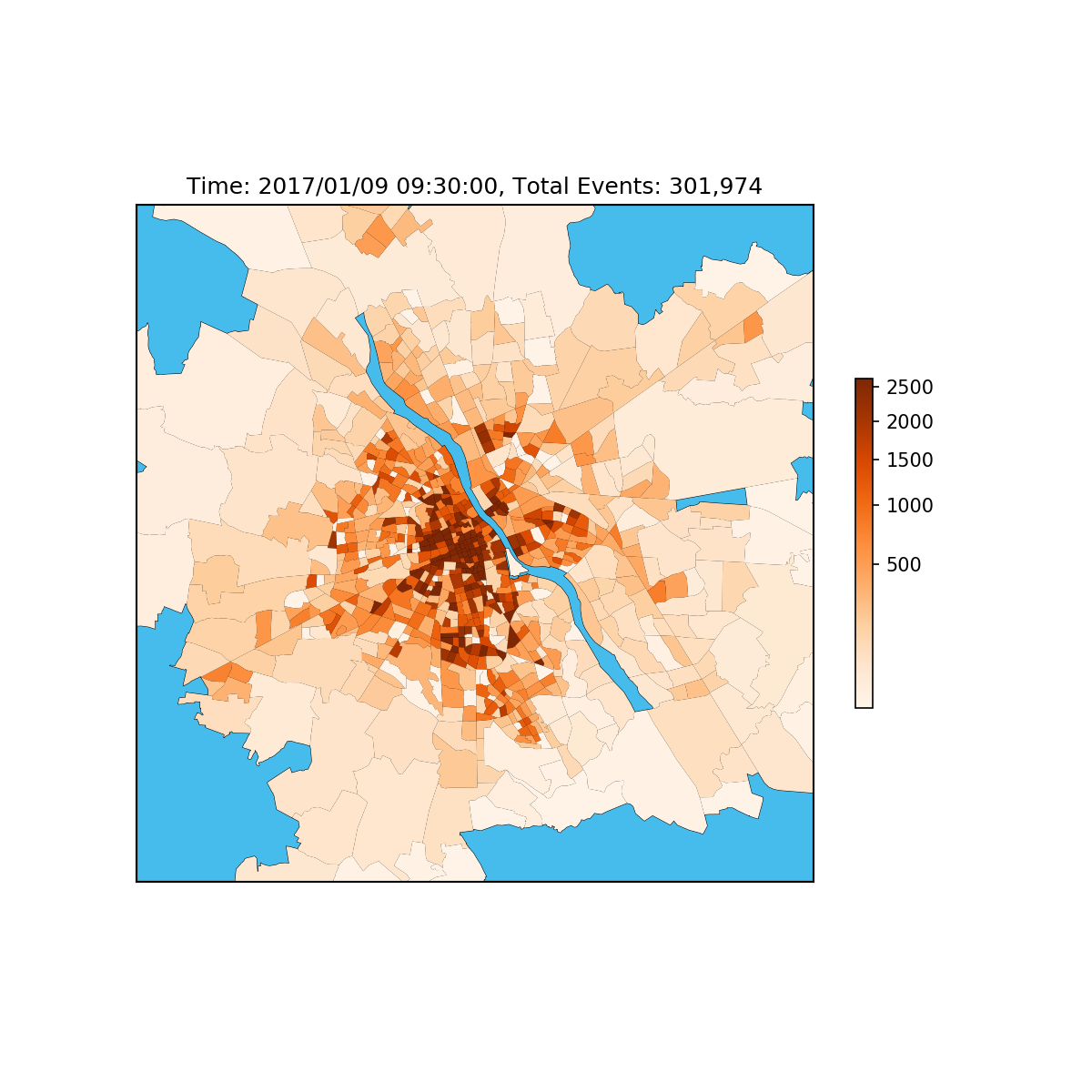}}
	\subfloat[4:30 p.\ m.\ ]{\includegraphics[clip, trim={2cm 2cm 2cm 2cm}, width= 0.33\textwidth]{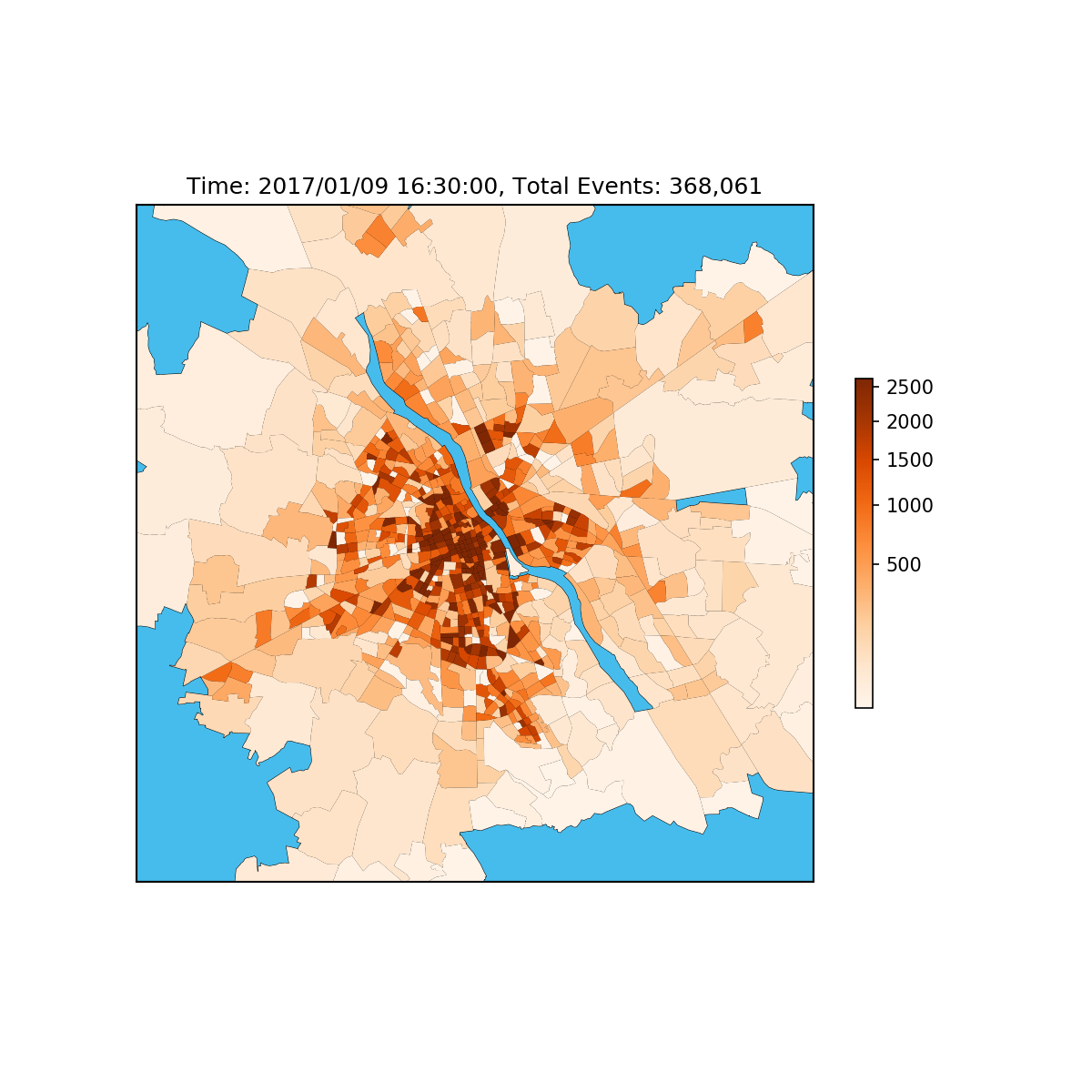}}
	\caption{Density of events across zones for different time periods. Color scale uses a power law normalization.
	}
	\label{fig:spatial}
\end{figure*}

\begin{figure}[t!]
%\subfloat[presence1]{
\includegraphics[width=\columnwidth]{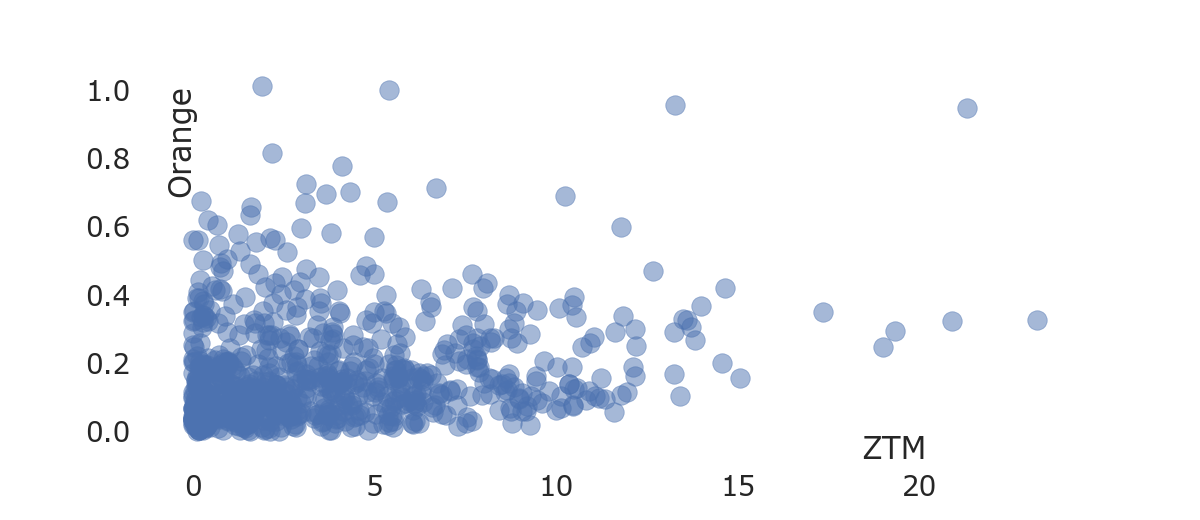}
%}
%\subfloat[presencerr]{\includegraphics[width= 0.45\textwidth]{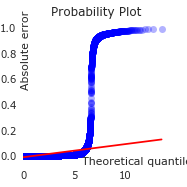}}
\caption{Presence by zone implied by the estimates of ZTM and the density of events in the Orange network.
}
\label{fig:presence}
\end{figure}  
 
For further comparison, we have implemented the doubly-constrained gravity model \cite{erlander1990gravity}.
There, the presence vectors $E_\ell$ across two consecutive time steps are seen as production 
and attraction vectors within the framework of a travel-demand model, which aims to generate trip distribution rates.
 An appropriate normalization can give yield matrix $X$ similar:
\begin{equation}\label{eq:gravity}
\begin{aligned}
x_{ij} = \frac{\left( A_i B_j E_{i1} E_{j2}\right)}{c_{ij}^\alpha},
\end{aligned}
\end{equation}
where $A_i$ and $B_j$ are zone-specific parameters for the origin and destination, respectively, 
 which are learned from the data, and $\alpha$ is a 
 parameter, which we assume to be $\alpha=1.0$. 
An iterative algorithm \cite{erlander1990gravity} can then be used to determine the flow, 
 subject to flow conservation constraints.

To test the approaches, data were aggregated 
 spatially to 820 zones defined by the public transport operator (ZTM),
 and temporally to 15-minute units, prior to the application 
 of any methods described in the present paper. 
Independently, for each cost function, an $820 \times 820$ matrix has
  been calculated, based on a description of each zone as a polygon with points defined by their latitude and longitude.  
Additionally, we have obtained randomised variants of the objective function by adding uniformly-distributed noise $U[0, 0.0001)$ to each
 element, e.g., $\Cadj(Z_i, Z_j)$ for all $i,j$.
(Note that the introduction of a small amount of noise makes it possible to obtain multiple optimizers 
with very similar objective function value.)
The data have been normalised to the number $N = 1,000,000$ of users, 
 i.e., such that the marginals, and hence the $672,400$ scalar variables sum up to $1,000,000$.
Subsequently, the linear program~\eqref{eq:LP1} of Section~\ref{constantnousers} with
  $672,400$ scalar variables and $1,640$ dense constraints has been formulated and solved 
  %and solved the linear program~\eqref{eq:LP1} 
  for each pair of consecutive time steps between 8 a.\ m.\ and 9.30 a.\ m.\ on each of the five work days
  and each of 4 randomisation of the objective function.
The sparse, integral solutions of all linear programs have been averaged to reduce the sparsity 
  in our estimate of the 1-step transition matrix $S_1$.
Subsequently, we have obtained $i$-step transition matrices for $i = 3, 4, \ldots 7$,
 i.e., corresponding to trips of 30, 45, 60, 75, and 90 minutes of duration, by the matrix-multiplication procedure
of Section~\ref{matrixpowers}.
Finally, we have computed a convex combination of the $i$-step transition matrices as suggested in 
Section~\ref{realistictransitions}, using weights detailed in the caption of Figure~\ref{fig:stats2}.

\begin{figure}[t!]
\subfloat[$1/6, 1/6, 1/6, 1/6, 1/6, 1/6$]{\includegraphics[width= 0.45\columnwidth]{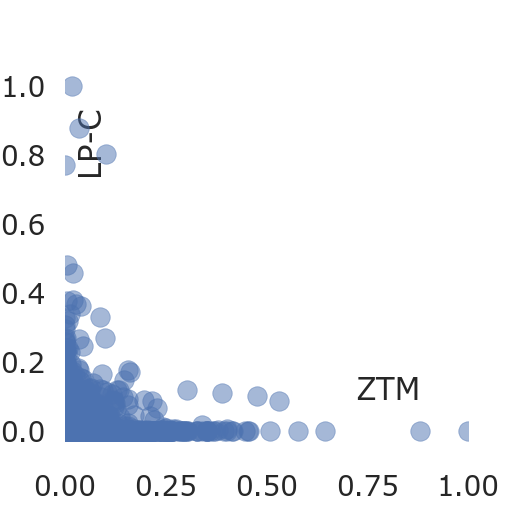}}
\subfloat[$0.5, 0.1, 0.1, 0.1, 0.1, 0.1$]{\includegraphics[width= 0.45\columnwidth]{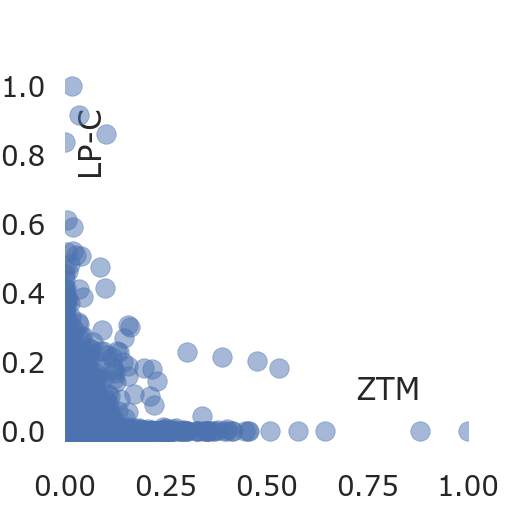}}\\
\subfloat[$1-5\epsilon, \epsilon, \epsilon, \epsilon, \epsilon, \epsilon$]{\includegraphics[width= 0.45\columnwidth]{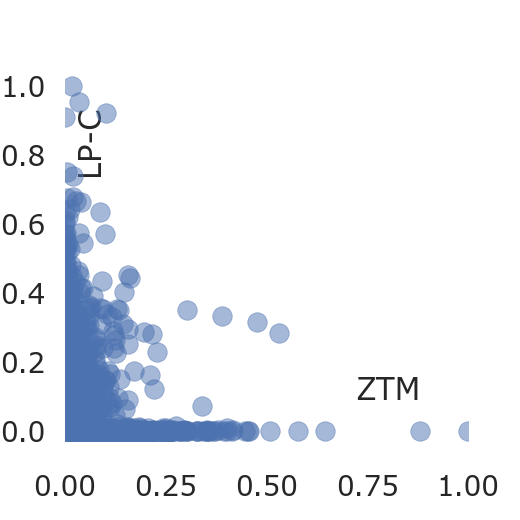}}
\subfloat[Gravity model]{\includegraphics[width= 0.45\columnwidth]{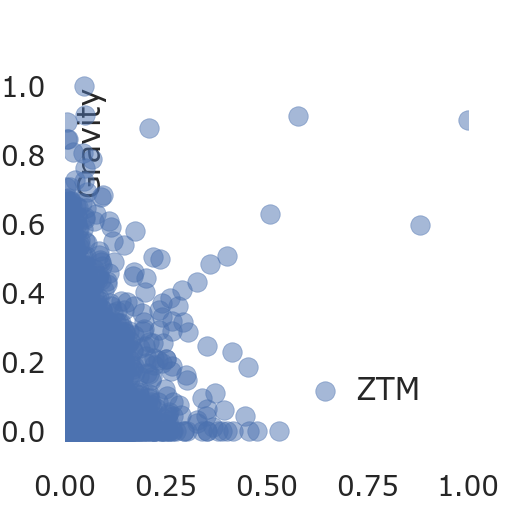}}
\caption{Solutions to Problem \ref{prob3} obtained using \eqref{eq:LP1} and 
three variants of the vector $h_1, h_2, h_3, h_4, h_5, h_6$,
compared against the estimates of the public transport operator (ZTM).}
\label{fig:stats2}
\end{figure}

Notice that we did not have an accurate estimate of the trip-duration distribution,
  which Figure~\ref{fig:stats2} suggests has a considerable impact.
Still, the match seems encouraging, considering the simplicity of the methods applied:
  we have considered the simplest cost function $\Cadj$, 
  we have assumed the constant number $N$ of users throughout in \eqref{eq:LP1}, 
  and the crudest method of arriving at the multi-step transition matrices.

\section{Related Work}

\subsection{The Demand Modelling}

There is a long history of the use of individual mobile phone subscriber data in transportation modelling. 
Outside of Europe, the most common study is based on CDR (Call Data Record). 
For example in \cite{5928310}, the CDR are used to analyse flows between 
the city and the suburban area of New York. 
In \cite{LEO201643}, researchers analyse the CDR from the main mobile 
operator in Mexico, and one of the papers focuses  on users movements analysis. 
In \cite{Wang:1502355},
CDR data including phone calls, SMS messages, and web browsing of customers of one US mobile operator in the San Francisco Bay area 
are used to determine patterns of urban road use. 
The Orange company organized two editions of the D4D Challenge (\url{http://www.d4d.orange.com/en/Accueil}), during which the anonymous CDR of 
Abidjan in Cote d'Ivoire data was made available to developers and researchers. 
Some of the present authors \cite{7117451,7471487} used 
the data set to optimize public transport. 
In a related paper \cite{GUNDLEGARD201629}, the data from the competition is 
used to estimate the travel demand and network allocation.
The access to the CDR data is particularly limited in Europe, though,
where privacy-protection regulation make access to the CDR data impossible 
without their anonymization and user's consent, in most cases.

Within Europe, \cite{ASGARI201669} discusses the use of GPS data, CDR, and cellular data (details of base stations) to determine the 
trajectory of subscriber mobility. The data for this study was collected by a group of volunteer users 
in cooperation with one of the French operators, and the study covers the metropolitan area of Paris
 (Ile-de-France). 
The use of anonymous mobile data and their use for locating subscribers anonymously 
 is described in \cite{6569081}. % and used by researchers in [10] for the Boston metropolitan area.
%The another publication [11] describes usage of 
\cite{7474365} discusses the use of data from Italy's Vodafone users to compute
 mobility patterns in Italian cities. In this case, the real-time events (due to voice, data, and SMS 
 communication) observed between A Interface and IU-CS interface in 2G and 3G mobile networks have been used. 
In \cite{4287441}, the authors describe the usage of aggregated mobile phone data from Rome, including the information
 about the base station telephone traffic, expressed in Erlang units. These records are combined with the 
 location and trajectory data of callers and data from the two city transportation companies (taxi and public bus company).
\cite{Luckner2017} describe the aggregation procedure used to derive the dataset we use. 

\subsection{The Algorithms}

Notice that the linear program \eqref{eq:LP1} has a well-known structure, in that the feasible set is the 
transportation polytope:

\begin{Def}[Transportation Polytope] \label{def:TP}
	Given $\gamma\in\reals^p$, $\eta\in\reals^q$, the set of matrices $X \in \mathbb{R}^{p \times q}, X_{ij} \geq 0$ satisfying
	\begin{equation}
	\begin{aligned}
	\textstyle \sum_{i=1}^p X_{ij} &= \eta_j   & \forall j=1,\dotsc,q\\
	\textstyle \sum_{j=1}^q X_{ij} &= \gamma_i & \forall i=1,\dotsc,p \\
	\end{aligned}\label{eq:TP}
	\end{equation}
	is called the \emph{$(p,q)$-transportation polytope defined by the 
\emph{marginals} $\gamma$ and $\eta$.} 
\end{Def}
% 	X\geq0, \quad 

Related algorithms for solving special-cases of linear programming are employed throughout pattern recognition and content-based content retrieval.
The commonly used statistical distances include the Earth mover's distance  (EMD) \cite{RubnerTomasiGuibas2000}, 
also known as the transportation cost metric \cite{Khot2006}, 
and 
Mallows distance \cite{mallows1972}, 
which are equivalent \cite{937632}, in the sense of being dual of each other.
These are discretisations of the metrics of Wasserstein and Monge-Kantorovich \cite{Deng2009},
which are also dual of each other.
Computing the distances amounts to solving a certain problem over the so-called transportation polytope.
Perhaps the best overview is provided by the two-volume 
work of Rachev and R{\" u}schendorf \cite{rachev1998mass,rachev1998mass2}.

In Computer Vision \cite{5459199,4587662,Haker2004}, 
a variety of exact and approximate methods have been considered.
Perhaps the best known exact method \cite{5459199}
uses the min-cost flow algorithm, yielding run-time of $O(n^2 \log n)$
for $n$ bins. 
Approximations based on gradient flows of certain partial differential equations
\cite{Haker2004} 
can be computed faster than earth mover's distance, empirically, but do not come 
with any guarantees, i.e., the approximation ratio.
Approximations based on the sum of absolute values of weighted wavelet coefficients of the difference of the histograms \cite{4587662} have been proposed. Such an approximation can be computed in time linear in the number of bins, but does not come with a bound on its quality, i.e., the approximation ratio.
Papers on approximations based on space-filling curves \cite{Jang2011}
have been withdrawn, recently.

In Machine Learning \cite{Cuturi2013}, Sinkhorn distances, which are a regularised variant of the Earth mover's distance, have been explored. 
Although the algorithms based on the Sinkhorn distance are substantially
faster than the network simplex for Earth mover's distance, 
It has also been realised \cite{4135678} that one can employ the following representation with $O(n)$ variables for $n$ bins:
leading to run-time cubic in the number of bins.

In Theoretical Computer Science, \cite{Khot2006} have studied the limits
of approximability of earth mover's distance, 
\cite{Andoni2008} have proposed logarithmic approximation in sub-linear time,
while \cite{Ba2010,Chan2014} proposed algorithms
approximation algorithms, whose run-time depends on the dimension of the domain space and the quality of the result required,
possibly sub-linear in the number of bins.
A similar in spirit is a recent method \cite{li2014}, which considers
dithering of the domain space, followed by max-flow computations \cite{5459199}.
We stress that our algorithms are much easier to implement and
provide the exact solution.

Overall, the present-best algorithms \cite{5459199} had run-time of $O(n^2 \log n)$ for $n$ bins, compared to $O(n)$ we present in Theorem~\ref{thm:tracemax}.
 Across all the references listed, only the value of the objective function, rather than the value of the argument at which it is achieved, is sought.
\section{Conclusions}

%The computation of Wasserstein and Monge-Kantorovich metrics 
%can be seen as trace-maximisation over infinite-dimensional
%transportation polytope. Although the infinite-dimensional
%case has been studied in some detail, c.f. Section 4.6, 
%of \cite{rachev1998mass},
%we aim to fully explicate the connections only in an extended
%version of this paper.

Across many transport-engineering applications, from planning of public transport
to balancing of a fleet of shared vehicles, one needs an up-to-date model of demand
for transportation. 
Further, one may be interested in higher moments of the demand, rather than just the commonly-used 
 expectation.
The use of mobile phone data makes it possible to address both issues.

%Mobile subscriber's location data provide a very good sample of the transportation
%patterns, but are increasingly hard to obtain by third parties, and even to process
%by the operators.
%Still, aggregate information about the per-cell activity are widely available.

For the first time, we have formalised the problem of extraction of a model of demand for transport from the aggregate data
 on the use of the base stations.
%First, we introduce the problem and study the linear programs (LP) involved.
We have shown that the associated optimisation problems have the form of a trace maximisation over the transportation polytope,
which allows for much faster algorithms than the usual simplex or interior point methods.
In particular, we show that there exist closed-form expressions for the values of the objective functions of trace maximisation over the transportation polytope. 
This allows for the computation of the objective function in time linear in the dimension of the input, in theory, 
and many orders of magnitude faster than solvers in the published literature, in practice. 
Numerous extensions are possible, 
aiming to infer mode choice,  
to study the variability of the demand over time,
as well as to improve the calibration techniques.
We envision this may open up a new direction of research on the interface of statistics and transportation engineering.

%It could also be seen as a fully polynomial-time approximation scheme (FPTAS) for the computation of certain metrics.

%It turns out that LPs have the form of a trace maximisation over the transportation polytope,
%which allows for much faster algorithms than the usual simplex or interior point methods.
%In particular, we show that there exists closed-form expressions for the values of the objective functions of trace maximisation over the transportation polytope. 
%This allows for the computation of the objective function in time linear in the dimension of the input, in theory, and many orders of magnitude faster than solvers in the published literature, in practice. 
%It could also be seen as a fully polynomial-time approximation scheme (FPTAS) for the computation of certain metrics.

\section*{Acknowledgments}
We would like to thank Marco Cuturi for having kindly provided us with his code for the comparison reported in Section~\ref{experiments}.
We would also like to thank Yossi Rubner et al. \cite{RubnerTomasiGuibas2000} and Ofir Pele and Michael Werman \cite{5459199}
for releasing their code on-line (\url{http://robotics.stanford.edu/~rubner/emd/default.htm}, 
\url{http://www.cs.huji.ac.il/~ofirpele/FastEMD/code/}). % (\texttt{emd\_hat\_gd\_metric}).
This work received funding from the European Union Horizon 2020 Programme 
(Horizon2020/2014-2020), under grant agreement no. 688380.

\bibliographystyle{plain}
\bibliography{distances,GSM,jarek}

\begin{thebibliography}{10}

\bibitem{Andoni2008}
Alexandr Andoni, Piotr Indyk, and Robert Krauthgamer.
\newblock Earth mover distance over high-dimensional spaces.
\newblock In {\em Proceedings of the Nineteenth Annual ACM-SIAM Symposium on
  Discrete Algorithms}, SODA '08, pages 343--352, 2008.

\bibitem{ASGARI201669}
Fereshteh Asgari, Alexis Sultan, Haoyi Xiong, Vincent Gauthier, and Mounîm~A.
  El-Yacoubi.
\newblock Ct-mapper: Mapping sparse multimodal cellular trajectories using a
  multilayer transportation network.
\newblock {\em Computer Communications}, 95:69 -- 81, 2016.
\newblock Mobile Traffic Analytics.

\bibitem{Ba2010}
Khanh~Do Ba, Huy~L. Nguyen, Huy~N. Nguyen, and Ronitt Rubinfeld.
\newblock Sublinear time algorithms for earth mover's distance.
\newblock {\em Theory of Computing Systems}, 48(2):428--442, 2010.

\bibitem{5928310}
R.~A. Becker, R.~Caceres, K.~Hanson, J.~M. Loh, S.~Urbanek, A.~Varshavsky, and
  C.~Volinsky.
\newblock A tale of one city: Using cellular network data for urban planning.
\newblock {\em IEEE Pervasive Computing}, 10(4):18--26, April 2011.

\bibitem{Chan2014}
Siu-On Chan, Ilias Diakonikolas, Gregory Valiant, and Paul Valiant.
\newblock Optimal algorithms for testing closeness of discrete distributions.
\newblock In {\em Proceedings of the Twenty-Fifth Annual ACM-SIAM Symposium on
  Discrete Algorithms}, SODA '14, pages 1193--1203. SIAM, 2014.

\bibitem{Cuturi2013}
Marco Cuturi.
\newblock Sinkhorn distances: Lightspeed computation of optimal transport.
\newblock In C.~J.~C. Burges, L.~Bottou, M.~Welling, Z.~Ghahramani, and K.~Q.
  Weinberger, editors, {\em Advances in Neural Information Processing Systems
  26}, pages 2292--2300. Curran Associates, Inc., 2013.

\bibitem{DeLoeraKim2014}
Jes{\'u}s~A De~Loera and Edward~D Kim.
\newblock Combinatorics and geometry of transportation polytopes: an update.
\newblock {\em Discrete Geometry and Algebraic Combinatorics}, 625:37--76,
  2014.

\bibitem{Deng2009}
Yuxin Deng and Wenjie Du.
\newblock The {K}antorovich metric in computer science: A brief survey.
\newblock {\em Electronic Notes in Theoretical Computer Science},
  253(3):73--82, 2009.
\newblock Proceedings of Seventh Workshop on Quantitative Aspects of
  Programming Languages (QAPL 2009).

\bibitem{erlander1990gravity}
Sven Erlander and Neil~F Stewart.
\newblock {\em The gravity model in transportation analysis: theory and
  extensions}, volume~3.
\newblock Vsp, 1990.

\bibitem{gondzio2012}
Jacek Gondzio.
\newblock Interior point methods 25 years later.
\newblock {\em European Journal of Operational Research}, 218(3):587--601,
  2012.

\bibitem{GUNDLEGARD201629}
David Gundlegård, Clas Rydergren, Nils Breyer, and Botond Rajna.
\newblock Travel demand estimation and network assignment based on cellular
  network data.
\newblock {\em Computer Communications}, 95:29 -- 42, 2016.
\newblock Mobile Traffic Analytics.

\bibitem{Haker2004}
Steven Haker, Lei Zhu, Allen Tannenbaum, and Sigurd Angenent.
\newblock Optimal mass transport for registration and warping.
\newblock {\em International Journal of Computer Vision}, 60(3):225--240.

\bibitem{6569081}
S.~Hoteit, S.~Secci, S.~Sobolevsky, G.~Pujolle, and C.~Ratti.
\newblock Estimating real human trajectories through mobile phone data.
\newblock In {\em 2013 IEEE 14th International Conference on Mobile Data
  Management}, volume~2, pages 148--153, June 2013.

\bibitem{Jang2011}
Min{-}Hee Jang, Sang{-}Wook Kim, Christos Faloutsos, and Sunju Park.
\newblock A linear-time approximation of the earth mover's distance.
\newblock {\em CoRR}, abs/1106.1521, 2011.

\bibitem{jonker1987}
Roy Jonker and Anton Volgenant.
\newblock A shortest augmenting path algorithm for dense and sparse linear
  assignment problems.
\newblock {\em Computing}, 38(4):325--340, 1987.

\bibitem{Khot2006}
Subhash Khot and Assaf Naor.
\newblock Nonembeddability theorems via {F}ourier analysis.
\newblock {\em Mathematische Annalen}, 334(4):821--852, 4 2006.

\bibitem{kuhn1955}
Harold~W Kuhn.
\newblock The {H}ungarian method for the assignment problem.
\newblock {\em Naval research logistics quarterly}, 2(1-2):83--97, 1955.

\bibitem{LeGall2014}
Fran{\c{c}}ois Le~Gall.
\newblock Powers of tensors and fast matrix multiplication.
\newblock In {\em Proceedings of the 39th international symposium on symbolic
  and algebraic computation}, pages 296--303. ACM, 2014.

\bibitem{LEO201643}
Yannick Leo, Anthony Busson, Carlos Sarraute, and Eric Fleury.
\newblock Call detail records to characterize usages and mobility events of
  phone users.
\newblock {\em Computer Communications}, 95:43 -- 53, 2016.
\newblock Mobile Traffic Analytics.

\bibitem{937632}
E.~Levina and P.~Bickel.
\newblock The earth mover's distance is the mallows distance: some insights
  from statistics.
\newblock In {\em Computer Vision, 2001. ICCV 2001. Proceedings. Eighth IEEE
  International Conference on}, volume~2, pages 251--256 vol.2, 2001.

\bibitem{li2014}
Longjie Li, Min Ma, Peng Lei, Xiaoping Wang, and Xiaoyun Chen.
\newblock A linear approximate algorithm for earth mover's distance with
  thresholded ground distance.
\newblock {\em Mathematical Problems in Engineering}, 2014, 2014.

\bibitem{4135678}
H.~Ling and K.~Okada.
\newblock An efficient earth mover's distance algorithm for robust histogram
  comparison.
\newblock {\em IEEE Transactions on Pattern Analysis and Machine Intelligence},
  29(5):840--853, May 2007.

\bibitem{7117451}
G.~Di Lorenzo, M.~Sbodio, F.~Calabrese, M.~Berlingerio, F.~Pinelli, and
  R.~Nair.
\newblock Allaboard: Visual exploration of cellphone mobility data to optimise
  public transport.
\newblock {\em IEEE Transactions on Visualization and Computer Graphics},
  22(2):1036--1050, Feb 2016.

\bibitem{Luckner2017}
Marcin Luckner, Aneta Ros{\l}an, Izabela Krzemi{\'{n}}ska, Jaros{\l}aw
  Legierski, and Robert Kunicki.
\newblock {\em Clustering of Mobile Subscriber's Location Statistics for Travel
  Demand Zones Diversity}, pages 315--326.
\newblock Springer International Publishing, Cham, 2017.

\bibitem{mallows1972}
C.~L. Mallows.
\newblock A note on asymptotic joint normality.
\newblock {\em Ann. Math. Statist.}, 43(2):508--515, 04 1972.

\bibitem{Pagh2013}
Rasmus Pagh.
\newblock Compressed matrix multiplication.
\newblock {\em ACM Trans. Comput. Theory}, 5(3):9:1--9:17, August 2013.

\bibitem{5459199}
O.~Pele and M.~Werman.
\newblock Fast and robust earth mover's distances.
\newblock In {\em 2009 IEEE 12th International Conference on Computer Vision},
  pages 460--467, Sept 2009.

\bibitem{7471487}
F.~Pinelli, R.~Nair, F.~Calabrese, M.~Berlingerio, G.~Di Lorenzo, and M.~L.
  Sbodio.
\newblock Data-driven transit network design from mobile phone trajectories.
\newblock {\em IEEE Transactions on Intelligent Transportation Systems},
  17(6):1724--1733, June 2016.

\bibitem{rachev1998mass}
S.T. Rachev and L.~R{\"u}schendorf.
\newblock {\em Mass Transportation Problems: Volume I: Theory}.
\newblock Probability and its Applications. Springer, 1998.

\bibitem{rachev1998mass2}
S.T. Rachev and L.~R{\"u}schendorf.
\newblock {\em Mass Transportation Problems: Volume II: Applications}.
\newblock Probability and its Applications. Springer, 1998.

\bibitem{4287441}
J.~Reades, F.~Calabrese, A.~Sevtsuk, and C.~Ratti.
\newblock Cellular census: Explorations in urban data collection.
\newblock {\em IEEE Pervasive Computing}, 6(3):30--38, July 2007.

\bibitem{RubnerTomasiGuibas2000}
Yossi Rubner, Carlo Tomasi, and Leonidas~J Guibas.
\newblock The earth mover's distance as a metric for image retrieval.
\newblock {\em International journal of computer vision}, 40(2):99--121, 2000.

\bibitem{schrijver2002}
A.~Schrijver.
\newblock {\em Combinatorial Optimization: Polyhedra and Efficiency}.
\newblock Algorithms and Combinatorics. Springer, 2002.

\bibitem{4587662}
S.~Shirdhonkar and D.~W. Jacobs.
\newblock Approximate earth mover's distance in linear time.
\newblock In {\em Computer Vision and Pattern Recognition, 2008. CVPR 2008.
  IEEE Conference on}, pages 1--8, June 2008.

\bibitem{7474365}
D.~Tosi and S.~Marzorati.
\newblock Big data from cellular networks: Real mobility scenarios for future
  smart cities.
\newblock In {\em 2016 IEEE Second International Conference on Big Data
  Computing Service and Applications (BigDataService)}, pages 131--141, March
  2016.

\bibitem{Wang2011}
Fahui Wang and Yanqing Xu.
\newblock Estimating {O–D} travel time matrix by {Google Maps API}:
  implementation, advantages, and implications.
\newblock {\em Annals of GIS}, 17(4):199--209, 2011.

\bibitem{Wang:1502355}
Pu~Wang, Timothy Hunter, Alexandre~M Bayen, Katja Schechtner, and Marta~C
  González.
\newblock {Understanding Road Usage Patterns in Urban Areas}.
\newblock {\em Scientific Reports}, 2(arXiv:1212.5327):1001. 47 p, Dec 2012.

\end{thebibliography}

\clearpage
\appendix

\section{Proof of Theorem~\eqref{thm:tracemax}}
\label{proof:tracemax}

Let us present the proof of

\begin{Thm*} Given $\eta, \gamma\in\integers_+^n$, with $\one^T\eta=\one^T\gamma=k$, consider
\begin{equation}\label{eq:ILP}\tag{IP2}
    z := \max_{X \in\integers_+^{n\times n}} \; \tr(X)  \;
    \text{ s.t. } X \one = \gamma, X^T \one = \eta.
\end{equation}
Then, $z = \sum_{i=1}^{n} \min\bigl\{	\eta_i,\gamma_i \bigr\}$.
This is computable in time linear in $n$.
\end{Thm*}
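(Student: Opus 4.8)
The plan is to prove the identity by matching an upper and a lower bound on $z$, and then to observe that the right-hand side can be read off in $O(n)$ time.

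First I would establish the upper bound $z \le \sum_{i=1}^{n}\min\{\eta_i,\gamma_i\}$. Fix any feasible $X$. For each $i$ the diagonal entry $x_{ii}$ is one summand of the $i$-th row sum $\sum_j x_{ij}=\gamma_i$ and also one summand of the $i$-th column sum $\sum_j x_{ji}=\eta_i$; since every entry is nonnegative, this forces $x_{ii}\le\gamma_i$ and $x_{ii}\le\eta_i$, hence $x_{ii}\le\min\{\eta_i,\gamma_i\}$. Summing over $i$ gives $\tr(X)\le\sum_{i=1}^{n}\min\{\eta_i,\gamma_i\}$, and maximising over feasible $X$ yields the bound.

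Next I would prove the matching lower bound by exhibiting one feasible integral $X$ that attains it. Set $x_{ii}:=\min\{\eta_i,\gamma_i\}$ on the diagonal and define residual marginals $r_i:=\gamma_i-x_{ii}\ge 0$ and $s_j:=\eta_j-x_{jj}\ge 0$. The key structural observation is that $r_i$ and $s_i$ are never both positive: if $\gamma_i>\eta_i$ then $x_{ii}=\eta_i$, so $s_i=0$, and symmetrically $r_i=0$ whenever $s_i>0$. Thus the set of rows still needing to emit flow is disjoint, as an index set, from the set of columns still needing to absorb it. It remains to fill the off-diagonal entries so that $\sum_{j\ne i}x_{ij}=r_i$ and $\sum_{i\ne j}x_{ij}=s_j$. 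Because $\one^T\eta=\one^T\gamma=k$, the residuals balance, $\sum_i r_i=\sum_j s_j=k-\sum_i\min\{\eta_i,\gamma_i\}$, so the associated transportation polytope is nonempty and integral by Lemma~\ref{lem:TP}. Since the support of $r$ and the support of $s$ are disjoint index sets, every transportation plan routing $r$ onto $s$ uses only cells $x_{ij}$ with $i\ne j$, so the diagonal stays fixed at $x_{ii}$ and the completed matrix is feasible for \eqref{eq:ILP}. For this $X$ we have $\tr(X)=\sum_i\min\{\eta_i,\gamma_i\}$, matching the upper bound and giving $z=\sum_{i=1}^{n}\min\{\eta_i,\gamma_i\}$.

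Finally, computing $z$ requires only the $n$ pairwise minima and their sum, which is $O(n)$, and no matrix need be materialised merely to report the optimal value. I expect the main obstacle to be the lower bound, and specifically the feasibility of the residual off-diagonal transportation problem: the crux is the disjoint-support observation, which guarantees that residual mass is never forced onto a forbidden diagonal cell, so that the standard transportation feasibility and integrality result of Lemma~\ref{lem:TP} applies verbatim to the restricted bipartite problem.
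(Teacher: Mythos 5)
Your proposal is correct and follows essentially the same route as the paper's proof: the identical upper bound via $x_{ii}\le\min\{\eta_i,\gamma_i\}$, and the identical construction that fixes the diagonal at the elementwise minimum and completes the off-diagonal entries by invoking the nonemptiness and integrality of the residual transportation polytope (Lemma~\ref{lem:TP}). Your disjoint-support observation replaces the paper's explicit permutation of indices and its separate treatment of the case $\eta=\gamma$, which is a minor streamlining rather than a different argument.
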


\begin{proof}
%The integrality of $X$ is no restriction, since for a transport problem (which we have here), integer input data guarantees an optimal integer solution, so we can consider the LP relaxation of~\eqref{eq:ILP} wlog, since the optimal value will be identical.

The proof proceeds in two steps. First, we prove that $y\leq \sum_{i=1}^{n} \min\bigl\{	\eta_i,\gamma_i \bigr\}$ for any feasible $X$. Second, we construct a feasible $X$ achieving this upper bound.

Step 1: Since $X_{ij} \geq 0$, we certainly have $X_{ij} \leq \sum_{\ell=1}^n X_{\ell j} = \eta_j$ and
$X_{ij} \leq \sum_{\ell=1}^n X_{i\ell} = \gamma_i$. Hence in particular $X_{ii} \leq  \eta_i
$ and $X_{ii} \leq  \gamma_i$, and so by choosing the tightest bound at each $i$,  $y=\sum_{i=1}^{n} X_{ii} \leq \sum_{i=1}^{n} \min\bigl\{	\eta_i,\gamma_i \bigr\}$ follows.

Step 2: Before we proceed with Step 2 proper, we make two assumptions and
explain that these are without any loss of generality.

Assumption 1: $\eta\neq\gamma$. Notice that if $\eta=\gamma$, one lets  $X_{ii}=\eta_i=\gamma_i$ for $i=1,\dotsc,n$ and $X_{ij}=0$ for all off-diagonal elements Such an $X$ trivially satisfies all constraints and $\tr(X) =  \sum_{i=1}^{n} \eta_i = \sum_{i=1}^{n} \min\bigl\{	\eta_i,\gamma_i \bigr\} =k$, and hence the result holds for this case. We can hence assume $\eta\neq\gamma$ without the loss of generality.

Assumption 2:% There exists at least one $1 \le i \le n$ such that $\eta_i > \gamma_i$, and at least one $1 \le j \le n$ such that $\eta_j < \gamma_j$, and one 
The marginals are ordered such that:
\begin{align*}
	\eta_i &\leq \gamma_i & \text{for } i=1,\dotsc,m \\
	\eta_i &> \gamma_i & \text{for } i=m+1,\dotsc,n.
\end{align*}
%The first part follows from Assumption 1.
If that is not the case, then let $P$ denote the permutation matrix so that $P\eta$ and $P\gamma$ are ordered.\footnote{%
	$P$ can be constructed easily: let $\mathcal{C}\coloneq \{i\mid \eta_i\leq\gamma_i\}$. %and $\mathcal{R}\coloneq\{i\mid \eta_i>\gamma_i\}$
	 Then $m=\card(\mathcal{C})$ and we obtain $P$ by rearranging the identity matrix $I_n$ so that the columns with indices in $\mathcal{C}$ are the first $m$ columns, in any order.  %denoting the columns of $P$ by $p_i$, we let $p_i = e_c$ for $i=1,\dotsc,m$ and $c$ ranging over the elements of $\mathcal{C}$ in any order.  )
	} 
Then, if $\bar{X}$ is feasible for~\eqref{eq:ILP} with the modified marginals $P\eta$ and $P\gamma$, then $X=P\bar{X}P$ is feasible for~\eqref{eq:ILP} and achieves the identical objective, since $\tr(P\bar{X}P)=\tr(PP\bar{X})=\tr(\bar{X})$.

Note that we certainly have $1\leq m \leq n-1$, because $m=0$ implies $\eta_i>\gamma_i$ for all $i$, which in turn implies $\sum_{i=1}^n \eta_i > \sum_{i=1}^n \gamma_i$, which contradicts  $\sum_{i=1}^n \eta_i =k= \sum_{i=1}^n \gamma_i$, hence $m\geq1$; $m=n$ together with  Assumption 1 ($\eta\neq\gamma$) leads to an analogous contradiction.

The construction of Step 2 fixes all values on the diagonal of $X$, the first $m$ columns and the last $n-m$ rows:
\begin{align*}
	X_{ii} &= \eta_i, & X_{ij} &=0 & \text{for } j=1,\dotsc,m, \quad i\neq j \\ 
	X_{ii} &= \gamma_i, & X_{ij} &=0 & \text{for } i=m+1,\dotsc,n, \quad j\neq i.	
\end{align*}
% and constraints the off-diagonal elements of $X$
Denote the remainders of the marginals by $\tld{\eta} \coloneq \eta - \min\{\eta,\gamma\}$, $\tld{\gamma} \coloneq \gamma - \min\{\eta,\gamma\}$, where the $\min\{\cdot\}$ is taken element-wise. 

\begin{figure}[t!]
	\centering\includegraphics[keepaspectratio,width=\columnwidth]{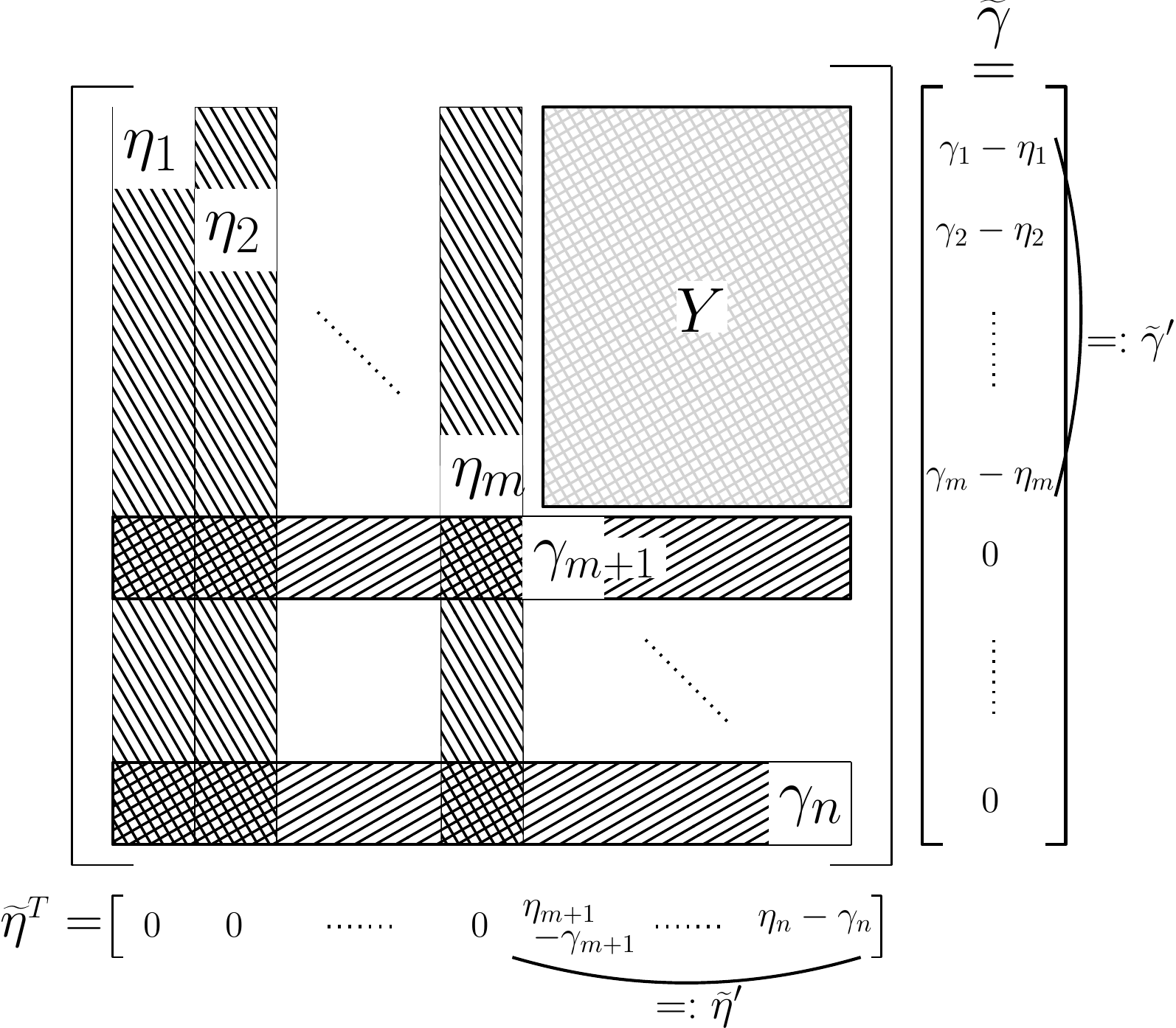}
	\caption{Illustration of the proof of Theorem~\ref{thm:tracemax}: After possibly reordering the marginals, the diagonals are filled in, and the hatched regions are filled with zeros. The elements in the rectangular block $Y$ are then used to satisfy the remaining constraints, which are collected in $\tld{\gamma}'$ and $\tld{\eta}'$.}
	\label{fig:proof}
\end{figure}

Let us now consider several properties of this construction; see also Figure~\ref{fig:proof} for an illustration:
\begin{enumerate}
	\item[P1] $\tr{X} =  \sum_{i=1}^{n} \min\bigl\{	\eta_i,\gamma_i \bigr\}$, which matches the bound of Step 1.
	\item[P2] %The off-diagonal elements within the first $m$ columns and last $n - m$ rows are uniformly 0. By Assumption 2, the constraints 
$\sum_{i=1}^n X_{ij} = \eta_j$ for $j=1,\dotsc,m$ and $\sum_{j=1}^n X_{ij} = \gamma_i$ for $i=m+1,\dotsc,n$, %are satisfied once the diagonal is set, 
i.e., all constraints but the $m$ first row sums and last $n-m$ column sums are already satisfied. 
	\item[P3] Only $X_{ij}$ for $i=1,\dotsc,m$ and $j=m+1,\dotsc,n$ are unassigned. Denote this $m\times(n-m)$ matrix by $Y$.
	\item[P4] $\one^T\tld{\eta}=\one^T\tld{\gamma}\eqcolon \tld{k}$, $\tld{\eta},\tld{\gamma}\geq0$, and $\tld{\eta}_i=\tld{\gamma}_j=0$ for $i=1,\dotsc,m$ and $j=m+1,\dotsc,n$. Denote the remaining elements of $\tld{\eta}$ and $\tld{\gamma}$ by $\tld{\eta}'$ and $\tld{\gamma}'$ respectively, i.e.\ $\tld{\eta}'\in\integers_+^{n-m}$ and $\tld{\gamma}'\in\integers_+^{m}$ and
	\[
		\tld{\eta} = \bmat{0\\ \tld{\eta}'}, \qquad \tld{\gamma} = \bmat{ \tld{\gamma}'\\0}.
	\]
	We still have $\one_{n-m}^T\tld{\eta}'=\one_m^T\tld{\gamma}'= \tld{k}$,
         since we only removed zero elements.
\end{enumerate}

By property P1 and P3, if we are able to find a matrix
$Y\in\integers_+^{m\times (n-m)}$ satisfying 
$Y\one_{n-m}=\tld{\gamma}'$, and
$Y^T\one_{m}=\tld{\eta}'$,
we can recover a feasible $X$ that achieves the bound of Step 1.
By P3 and P4, this amounts to finding a point in the ($m,n-m)$ transportation polytope defined by the marginals $\tld{\gamma}'$ and $\tld{\eta}'$.
Since by P4 we have that $\one_{n-m}^T\tld{\eta}'=\one_m^T\tld{\gamma}'$, it follows from Lemma~\ref{lem:TP} that such a $Y$ does exist, which concludes Step 2. %This step of the proof is probably much more straightforward when illustrated, see Figure~\ref{fig:proof} for that.
\end{proof}

\end{document}